\newtheorem{thm}{Theorem}[section]
 \theoremstyle{definition}
 \theoremstyle{remark}
 \numberwithin{equation}{section}
\def\be#1 {\begin{equation} \label{#1}}
\def\ee{\end{equation}}
\def\sqw{\hbox{\rlap{\leavevmode\raise.3ex\hbox{$\sqcap$}}$%
\sqcup$}}
\def\findem{\ifmmode\sqw\else{\ifhmode\unskip\fi\nobreak\hfil
\penalty50\hskip1em\null\nobreak\hfil\sqw
\parfillskip=0pt\finalhyphendemerits=0\endgraf}\fi}
\title{$L^2$ to $L^p$ bounds for spectral projectors on thin intervals in Riemannian manifolds}
\subjclass[2000]{...}
\keywords{Riemannian manifolds, $L^p$ estimates, spectral projectors, eigenfunctions, quasimodes}
\begin{document}

\author[P. Germain]{Pierre Germain}
\address{Pierre Germain, Department of Mathematics, Huxley Building, South Kensington Campus,
Imperial College London, London SW7 2AZ, United Kingdom}
\email{pgermain@ic.ac.uk}

\begin{abstract}
Given a Riemannian manifold with Laplace-Beltrami operator $\Delta$, consider the projector on a thin interval $[\lambda-\delta,\lambda+\delta]$ associated to the self-adjoint operator $\sqrt{-\Delta}$. Viewed as an operator from $L^2(M)$ to $L^p(M)$, what is its operator norm? For $\delta=1$, this question is fully answered by a celebrated theorem of Sogge, which applies to any manifold. For $\delta < 1$, the global geometry of the manifold comes into play, and connections to a number of mathematical fields (such as Differential Geometry, Combinatorics, Number Theory) appear, but the problem remains mostly open. The aim of this article is to review known results, focusing on cases exhibiting symmetry (where the regime where $\delta$ is polynomially small in $\lambda$ becomes approachable) and emphasizing harmonic analytic rather than microlocal methods.
\end{abstract}

\maketitle

\tableofcontents

\section{Introduction}

Consider a (smooth, complete) Riemannian manifold $M$, of dimension $d$, with Laplace-Beltrami operator $\Delta$. Define the spectral projector $P_{\lambda,\delta}$ through functional calculus by
$$
P_{\lambda,\delta} = \mathbf{1}_{[\lambda-\delta,\lambda+\delta]} \left( \sqrt{-\Delta} \right).
$$
It is the orthogonal projector on the (generalized) eigenmodes of $\sqrt{-\Delta}$ with eigenvalue in $[\lambda-\delta,\lambda+\delta]$.
We ask the following question: 

\begin{center}
\textit{What is the operator norm of $P_{\lambda,\delta}$ from $L^2(M)$ to $L^p(M)$?}
\end{center}

\medskip

A more classical question is to ask for $L^p$ bounds of $L^2$-normalized eigenfunctions on compact manifolds, which would correspond to $\delta =0$. Similarly, the question above could be formulated in terms of $L^p$ bounds for $L^2$-normalized~{\it almost-}eigenfunctions.

Why is it meaningful to consider the case $\delta > 0$? First, this has the advantage of applying to any manifold, be it compact or not, with discrete or continuous spectrum, and this more general viewpoint is helpful. Second, in the case of compact manifolds, the case $\delta =0$ seems in general completely out of reach, while the case $\delta =1$ is fully understood, as we will see; thus, interpolating between these two situations seems natural. Finally, considering a window of positive width encodes information on the distribution of the spectrum, together with the size of the eigenfunctions.

\medskip

Before reviewing the theory, let us observe that the question of estimating $\| P_{\lambda,\delta} \|_{L^2 \to L^p}$ can be asked in different ways. First, an immediate $TT^*$ argument shows that
\begin{equation}
\label{TTstar}
\| P_{\lambda,\delta} \|_{L^2 \to L^p}^2 = \| P_{\lambda,\delta} \|_{L^{p'} \to L^p}.
\end{equation}
Second, it is essentially equivalent to estimate the operator
norm of $P_{\lambda,\delta}$, or that of the smoothed out version
$$
P_{\lambda,\delta}^\flat = \chi \left( \frac{\sqrt{-\Delta} - \lambda}{\delta} \right),
$$
where $\chi$ is a cutoff function; we will use indiscriminately all these formulations\footnote{Up to logarithmic factors, it is also equivalent to consider boundedness of the resolvent operator $(\Delta - z)^{-1}$, $\mathfrak{Im} z \neq 0$.}.

\bigskip

A beautiful textbook presentation of the theory for $\delta=1$ can be found in~\cite{Sogge1}, see also~\cite{SoggeHangzhou,Zelditch0,Zelditch}. In this review note, we will try to give an overview of progress on the case $\delta<1$, in particular in the case where $\delta$ is polynomially small in $\lambda$.

A very closely connected topic is that of Quantum Unique Ergodicity for negatively curved manifolds, or for general manifolds the description of possible weak-$L^2$ limits of eigenfunctions. This is in many ways an $L^2$ version of the $L^p$-problems which will occupy us in the present review, with much of the same underlying phenomenology. We refer to~\cite{Anantharaman,Sarnak} for surveys on Quantum Unique Ergodicity.

\section{Sogge's universal estimates for $\delta = 1$}

\subsection{Sogge's theorem} In this subsection, we consider general compact manifolds $M$, without any further restriction. Denoting by $(\lambda_j)$ and $(\varphi_j)$ the eigenvalues and eigenvectors of the Laplacian on $M$, H\"ormander's bound~\cite{Hormander}, see also~\cite{Avakumovic,Levitan} states that
\begin{equation}
\label{classicalbound}
\| \varphi_j \|_{L^\infty} \lesssim \lambda_j^{\frac{d-1}{2}} \| \varphi_j \|_{L^2}
\end{equation}
(this estimate arose in connection with the Weyl law, to which we will come back in Section~\ref{sectionweyl}).

This classical bound was subsumed in the following much stronger result of Sogge: it covers any $p>2$, allows for spectral projectors instead of eigenfunctions, and it is not only providing an upper bound, but also a lower bound. It will be convenient to denote
\begin{equation}
\label{formulagamma}
\gamma(p) = \max \left[ \frac{d-1}{2} - \frac{d}{p}\,,\,\frac{d-1}{2} \left( \frac{1}{2} - \frac{1}{p} \right) \right],
\end{equation}
since this exponent will appear throughout the present review.

\begin{thm}[Sogge] If $M$ is a complete Riemannian manifold with bounded geometry, then
\begin{equation}
\label{universalsogge}
\| P_{\lambda,1} \|_{L^2 \to L^p} \lesssim \lambda^{\gamma(p)} \qquad \mbox{if $2 \leq p \leq \infty$}.
\end{equation}
Furthermore, there exists a constant $R_0$ such that: for any $\lambda_0$, there exists $\lambda$ with $|\lambda - \lambda_0| < R_0$ and
$$
\| P_{\lambda,1} \|_{L^2 \to L^p} \gtrsim \lambda^{\gamma(p)} \qquad \mbox{if $2 \leq p \leq \infty$}.
$$
\end{thm}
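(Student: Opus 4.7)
My plan is to pin down $\|P_{\lambda,1}\|_{L^2\to L^p}$ by a pair of matching bounds, with the bounded geometry hypothesis providing uniform control in both directions.

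\textbf{Upper bound.} First I would replace $P_{\lambda,1}$ by a smooth spectral cutoff $\chi(\sqrt{-\Delta}-\lambda)$ with $\chi\in \s(\R)$, $\chi\geq \mathbf{1}_{[-1,1]}$, and $\hat\chi$ supported in a small interval $[-t_0,t_0]$ chosen below the uniform injectivity radius afforded by bounded geometry. Using
\[
\chi(\sqrt{-\Delta} - \lambda) = \frac{1}{2\pi}\int \hat\chi(t)\,e^{-it\lambda}\,e^{it\sqrt{-\Delta}}\,dt,
\]
finite propagation speed confines the kernel $K_\lambda$ to a tube of diameter $O(t_0)$, inside which the Hadamard parametrix represents it (modulo smoothing) as
\[
K_\lambda(x,y) \simeq (2\pi)^{-d}\int e^{i\langle \xi,\exp_x^{-1}(y)\rangle}\,\chi(|\xi|_g-\lambda)\,a(x,\xi)\,d\xi,
\]
with a symbol $a$ uniformly controlled thanks to bounded geometry. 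Two endpoint estimates fall out: a direct bound in $\xi$ gives $K_\lambda(x,x)\lesssim\lambda^{d-1}$, hence (by Cauchy--Schwarz applied to the projector) the $L^2\to L^\infty$ bound $\lambda^{(d-1)/2}$; and at the critical exponent $p_c = 2(d+1)/(d-1)$, recognizing $\chi(|\xi|_g-\lambda)\,d\xi$ as a smoothed surface measure on the cosphere $\{|\xi|_g=\lambda\}$ allows one to invoke the Stein--Tomas restriction theorem (with $\lambda$-rescaling) to obtain the $L^2\to L^{p_c}$ bound $\lesssim\lambda^{\gamma(p_c)}$. Interpolating between $p=2$, $p=p_c$, and $p=\infty$ yields $\gamma(p)$ as the envelope, matching~\eqref{formulagamma}.

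\textbf{Lower bound.} Fix any $x_0\in M$ and work in a normal coordinate chart of fixed radius around it. I would saturate $\gamma(p)$ by two quasimodes, one per branch. For $p\geq p_c$ the spherical-shell example
\[
g(x) = \eta(x)\int_{||\xi|-\lambda_0|<1}e^{i\xi\cdot(x-x_0)}\,d\xi
\]
has Euclidean Fourier support on the annulus $\{||\xi|-\lambda_0|<1\}$, and Bessel asymptotics give $\|g\|_{L^p}/\|g\|_{L^2}\sim\lambda_0^{(d-1)/2-d/p}$, saturating the first branch. For $2\leq p\leq p_c$ the Knapp slab
\[
f(x)=\eta_1(x^1)\,\eta_2(\lambda_0^{1/2}(x^2,\dots,x^d))\,e^{i\lambda_0 x^1},
\]
of length $1$ and transverse thickness $\lambda_0^{-1/2}$, has Fourier support on the cap $\xi_1\approx\lambda_0$, $|(\xi_2,\dots,\xi_d)|\lesssim \lambda_0^{1/2}$, itself contained in $\{||\xi|-\lambda_0|=O(1)\}$, and an explicit computation saturates the second branch $\frac{d-1}{2}(\tfrac12-\tfrac1p)$. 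Bounded geometry ensures that Euclidean Fourier localization on $\{||\xi|-\lambda_0|=O(1)\}$ transfers to $\sqrt{-\Delta}$-spectral concentration on an interval $I_{\lambda_0}$ of length $O(R_0)$; a pigeonhole on $I_{\lambda_0}$ then selects a $\lambda$ with $|\lambda-\lambda_0|<R_0$ for which $P_{\lambda,1}f$ (resp.\ $P_{\lambda,1}g$) retains a positive fraction of $\|f\|_{L^p}$ (resp.\ $\|g\|_{L^p}$), yielding the claimed lower bound.

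The principal obstacle is the Stein--Tomas input at the critical exponent $p_c$: it is the only non-perturbative harmonic-analytic ingredient, the one that records the curvature of the cosphere $\{|\xi|_g=1\}$ and thereby forces the dimension-dependent breakpoint $p_c$ into the formula for $\gamma$. A secondary technical difficulty is that the Knapp and spherical-shell constructions are genuine quasimodes only for the frozen-coefficient Euclidean operator in the chart; bounded geometry, which uniformly controls the error between $\sqrt{-\Delta}$ and its Euclidean principal symbol, is precisely what makes the spectral concentration argument above go through and accounts for the bounded shift $R_0$ between $\lambda_0$ and $\lambda$.
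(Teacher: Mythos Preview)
Your overall architecture matches the paper's: localize via a compactly supported time cutoff, obtain an oscillatory kernel, and interpolate between $p=2$, $p=\infty$, and the Stein--Tomas exponent $p_{ST}=\tfrac{2(d+1)}{d-1}$; the lower bound via spherical and Knapp quasimodes built in a coordinate chart, followed by pigeonholing onto a unit spectral window, is likewise what the paper has in mind.

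There is, however, a genuine gap at the critical exponent. You write the kernel as $\int e^{i\langle\xi,\exp_x^{-1}(y)\rangle}\chi(|\xi|_g-\lambda)\,a(x,\xi)\,d\xi$ and propose to ``invoke the Stein--Tomas restriction theorem'' on the grounds that $\chi(|\xi|_g-\lambda)\,d\xi$ is a smoothed cosphere measure. But the cosphere $\{|\xi|_{g(x)}=\lambda\}$ depends on the base point $x$, so the operator is \emph{not} a Fourier multiplier and the constant-coefficient Stein--Tomas theorem does not apply; nor can one freeze the metric on small balls, since the localization scale and the scale on which Stein--Tomas yields the sharp exponent are incompatible. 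After performing the $\xi$-integral by stationary phase, the kernel takes the form the paper records,
\[
K_{\lambda,1}(x,y)=\lambda^{\frac{d-1}{2}}\,a(x,y)\,e^{i\lambda\psi(x,y)}+O(\lambda^{-\infty}),\qquad \psi(x,y)=d_g(x,y),
\]
and the $L^2\to L^{p_{ST}}$ bound for such a variable-coefficient oscillatory integral operator is exactly the content of the Carleson--Sj\"olin theorem (Carleson--Sj\"olin and H\"ormander in dimension~$2$, Stein in higher dimensions): the phase $\psi$ satisfies the Carleson--Sj\"olin rank-$(d-1)$ Hessian condition, which is what encodes the curvature of the cosphere in the variable-coefficient setting. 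This is the ``variable coefficient generalization of the Stein--Tomas theorem'' the paper invokes, and it is a substantially harder theorem than Stein--Tomas itself --- your sketch correctly identifies curvature of the cosphere as the mechanism, but the tool you name does not reach it.
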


The exponent $\gamma(p)$ is defined as a maximum in~\eqref{formulagamma}, and which of the two terms in this maximum is dominant depends on how $p$ compares compares to the Stein-Tomas exponent defined by
$$
p_{ST} = \frac{2(d+1)}{d-1}.
$$
More precisely
\begin{itemize}
\item For $p \geq p_{ST}$, $P_{\lambda,1}$ is in the \textit{point-focusing regime}: the term $\lambda^{\frac{d-1}{2} - \frac{d}{p}}$ is dominant, and examples of functions which focus at a point can be shown to achieve this value (up to a bounded factor) for $\frac{\| P_{\lambda,1} f \|_{L^p}}{\| f \|_{L^2}}$. 
\item For $2 \leq p \leq p_{ST}$, $P_{\lambda,1}$ is in the \textit{geodesic-focusing regime}: the term
$\lambda^{\frac{d-1}{2} \left( \frac{1}{2} - \frac{1}{p} \right)}$ is dominant. Functions can be constructed which focus on a geodesic and achieve this value (up to a bounded factor) for $\frac{\| P_{\lambda,1} f \|_{L^p}}{\| f \|_{L^2}}$.
\end{itemize}

\begin{figure}
  \includegraphics[width=12 cm]{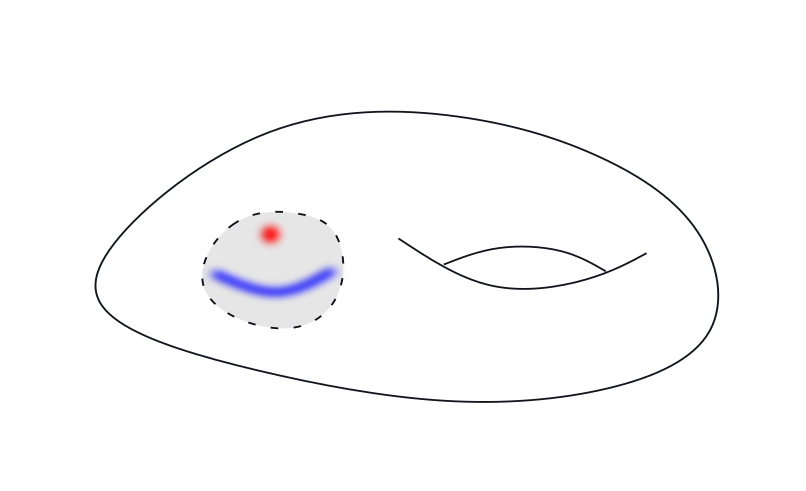}
\caption{Consider a coordinate patch on a Riemannian manifold. One can then contruct spectrally localized functions ($P_{\lambda,\delta} f = f$) which concentrate their $L^p$ mass in a neighborhood of a given point (red), or a given geodesic (blue). These functions saturate the value $\lambda^{\gamma(p)}$ for the operator norm in~\eqref{universalsogge} up to a bounded factor in the regimes $p>p_{ST}$ and $p<p_{ST}$, respectively. }
\end{figure}

\begin{proof}[Proof of Theorem~\ref{universalsogge}] A detailed proof can be found in~\cite{Sogge0,Sogge1} in the compact case, see also~\cite{AGL} for the simple extension to the complete case. A sketch of the upper bound is as follows.

First, one can reduce matters to an operator $\widetilde{P}_{\lambda,1}$ which is a slightly modified version of $P_{\lambda,1}$, simpler to estimate since it is local. Through pseudodifferential calculus, one can then obtain a very precise description of the kernel of $\widetilde{P}_{\lambda,1}$:
\begin{equation}
\label{kernelK}
\widetilde{P}_{\lambda,1} f (x) = \int_M K_{\lambda,1}(x,y) f(y) \,dx, \qquad K_{\lambda,1}(x,y) = \lambda^{\frac{d-1}{2}} a(x,y) e^{i \lambda \psi (x,y)} + O(\lambda^{-\infty}),
\end{equation}
where $a \in \mathcal{C}_0^\infty$. The desired result follows by interpolation between the following Lebesgue exponents
\begin{itemize}
\item For $p=2$, the bound $O(1)$ is an immediate consequence of the definition of $\widetilde{P}_{\lambda,1}$.
\item For $p=\infty$, the bound $O(\lambda^\frac{d-1}{2})$ follows immediately from the formula ~\eqref{kernelK} and the Cauchy-Schwarz inequality.
\item Finally, $p = p_{ST}$ is the critical exponent. The properties of the phase $\psi(x,y)$ now come into play: indeed, it satisfies the Carleson-Sj\"olin condition, which, roughly speaking, requires that its Hessian has rank $n-1$. The $L^2 \to L^p$ bounds on Carleson-Sj\"olin operators (due in dimension 2 to Carleson-Sj\"olin~\cite{CS} and H\"ormander~\cite{Hormander1}, and in higher dimension to Stein~\cite{SteinBeijing}) then give the desired result. These bounds can be thought of as a variable coefficient generalization of the Stein-Tomas theorem, which will presented in Section~\ref{Euclidean}.
\end{itemize}
\end{proof}

As observed in~\cite{AnkerGermainLeger}, a consequence of the optimality of the Sogge estimates is the universal lower bound
\begin{equation}
\label{universallowerbound}
\| P_{\lambda,\delta} \|_{L^2 \to L^p} \gtrsim  \lambda^{\gamma(p)} \delta^{\frac{1}{2}} \qquad \mbox{if $2 \leq p \leq \infty$}
\end{equation}
(more precisely: for any $\lambda_0,\delta$, there exists $\lambda$ such that $|\lambda - \lambda_0| < R_0$, and such that the above holds).

\subsection{The case of the sphere}

As we saw, the universal Sogge estimate~\eqref{universalsogge} corresponds to the case $\delta =1$, and is sharp for any manifold. Is it possible to improve this bound by taking $\delta$ smaller? 
The example of the sphere shows that the answer is negative in general. Indeed, the spectrum of the Laplace-Beltrami operator on $\mathbb{S}^d$ is given by $\{ k(k+d-1),\,k \in \mathbb{Z}\}$, so that
$$
P_{\lambda,\delta} = P_{\lambda,\frac{1}{2}} \qquad \mbox{if} \;\; \delta < \frac{1}{2}, \;\; \lambda = \sqrt{k(k+d-1)}, \; k \in \mathbb{Z}
$$
(and $\lambda$ sufficiently big). This implies that operator bounds for $P_{\lambda,\delta}$ and $P_{\lambda, \frac{1}{2}}$ are equal and that the universal Sogge estimate~\eqref{universalsogge} will be saturated by eigenfunctions; the two following examples appear in~\cite{Sogge00} and are fundamental.
\begin{itemize}
\item In the point-focusing regime, \textit{zonal spherical harmonics} saturate~\eqref{universalsogge}. By definition, zonal functions can be written as functions of the geodesic distance to a point on the sphere. Each eigenspace contains a zonal function, unique up to the symmetry group $SO(d+1)$. Normalizing this function in $L^2$,  it reaches a size $\sim \lambda^{\frac{d-1}{2}}$ on a set of diameter $\sim \lambda^{-1}$ around the pole, leading to the estimate $\| f \|_{L^p} \sim \lambda^{\frac{d-1}{2} - \frac{d}{p}} \| f \|_{L^2} $ for $p > p_{ST}$.
\item In the geodesic-focusing regime, \textit{highest weight spherical harmonics} saturate~\eqref{universalsogge}. Viewing the sphere $\mathbb{S}^{d-1}$ as embedded in $\mathbb{R}^{d+1}$, they can be taken to be the trace on the sphere of $(x_1 + i x_2)^k$. Normalizing this function in $L^2$, it reaches a size $\sim \lambda^{\frac{d-1}{4}}$ on a neighbourhood of size $\sim \lambda^{-\frac 12}$ around the great circle $x_1^2 + x_2^2 =1$, leading to the estimate $\| f \|_{L^p} \sim \lambda^{\frac{d-1}{2}\left(\frac{1}{2} - \frac{1}{p}\right)}$ for $p < p_{ST}$.
\end{itemize} 

\begin{figure}
  \includegraphics[width=12 cm]{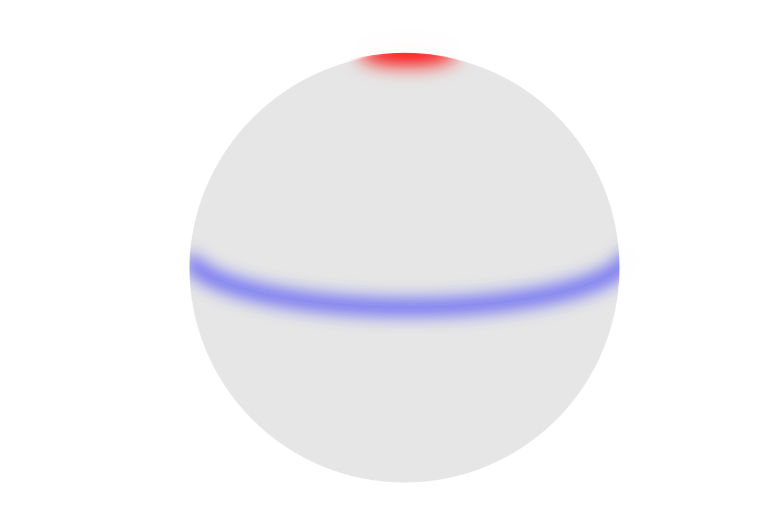}
\caption{On the sphere, zonal spherical harmonics concentrate most of their $L^p$ mass close to the North pole (in red), and highest weight spherical harmonics concentrate most of their $L^p$ mass close to the equator (in blue). These eigenfunctions of the Laplacian saturate the Sogge bound~\eqref{universalsogge} for $p>p_{ST}$ and $p<p_{ST}$ respectively. Of course, the North pole and the equator do not play a distinguished role, since the Laplacian is invariant under $O(d+1)$, and this figure can be rotated.}
\end{figure}

\subsection{When are Sogge's estimates optimal for eigenfunctions?} We saw that, in the case of the sphere, the Sogge bound~\eqref{universalsogge} agrees with the optimal bound for eigenfunctions, or in other words that the spectral projectors for $\delta=0$ and $\delta=1$ have comparable operator norms. Is it possible to characterize all manifolds for which this phenomenon occurs?

\label{soggeoptimal?}

\subsubsection{Point-focusing regime} A line of research initiated by Sogge and Zelditch ~\cite{SZ,SZ1,SZ2, SoggeXEDP} and pursued by Canzani and Galkowski~\cite{CG1,CG0} seeks conditions on a compact manifold $M$ under which there exists, or not, a sequence of eigenfunctions $\varphi_j$ associated to eigenvalues $\lambda_j$ exhibiting \textit{maximum eigenfunction growth}, namely
\begin{equation}
\label{focusingeigenfunction}
\| \varphi_j \|_{L^\infty} \sim \lambda_j^{\frac{d-1}{2}} \| \varphi_j \|_{L^2}.
\end{equation}
Of course, this implies that $\| P_{\lambda,\delta} \|_{L^2 \to L^\infty} \gtrsim \lambda^{\frac{d-1}{2}}$ for any $\delta>0$, along a sequence of $\lambda$.

While very powerful and precise results have been proved along these lines, we will give here a simple example of such a statement, which can be found in~\cite{STZ}. For $x \in M$, define $\mathcal{L}_x \subset S_x^*M$ (unit cotangent bundle) to be the set of directions $\xi \in T_x^*M$ such that the geodesic $\gamma$ with initial data $\gamma(0) = x$, $\dot \gamma(0) = \xi$ returns to $x$. Then there cannot exists a sequence of eigenfunctions with maximal eigenfunction growth unless $\mathcal{L}_x$ has positive measure for some $x \in M$.

This is a strong constraint, of course reminiscent of the case of the sphere; but it is not stable under perturbation. Indeed, it was showed~\cite{SZ} that for generic metrics, $\| \varphi_j \|_{L^\infty} = o( \lambda_j^{\frac{d-1}{2}})$.

Finally, we mention the original point of view in~\cite{Steinerberger}, which relates poinwise $L^\infty$ growth to correlations between eigenfunctions at different points of the manifold.

\subsubsection{Geodesic-focusing regime} We call a geodesic stable if its associated Poincar\'e map only has eigenvalues of modulus 1. If there exists a stable closed geodesic, one can construct a sequence $(\lambda_j)$ going to infinity and associated \textit{quasimodes} $(f_j)$ such that
$$
\left\| \left[ \Delta + \lambda_j^2 \right] f_j \right\|_{L^2} \lesssim_N \lambda_j^{-N} \qquad \mbox{for any $j,N \in \mathbb{N}$;}
$$
furthermore, these quasimodes concentrate within $\frac{1}{\sqrt{\lambda_j}}$ of the geodesic, and are bounded pointwise by $\lambda_j^{\frac{d-1}{4}}$. To the best of our knowledge, this exact statement does not appear in the literature, but should follow from the construction of quasimodes supported on stable geodesics~\cite{GW,Ralston,Donnelly,ACMT}.

As a consequence of the existence of these quasimodes, the spectral projectors can be bounded from below by
$$
\| P_{\lambda_j,\delta} \|_{L^2 \to L^p} \gtrsim_N \lambda_j^{\frac{d-1}{2} \left( \frac{1}{2} - \frac{1}{p} \right)} \qquad \mbox{if $\delta > \lambda_j^{-N}$}.
$$
Note that the right-hand side matches the universal bound of Sogge in the regime $p < p_{ST}$.

\bigskip

The preceding discussion suggests that, for "most" manifolds, the bounds on $\| P_{\lambda,\delta} \|_{L^2 \to L^p}$ should improve as $\delta$ decreases from $\delta=1$. In the remainder of this review article, we will try and characterize this improvement in some specific examples.

\section{The Euclidean space and its quotients}

\label{Euclidean}

\subsection{The Euclidean space} Consider the extension operator mapping functions on the sphere $\mathbb{S}^{d-1}$ to functions on the whole space $\mathbb{R}^d$ through the formula
$$
f \mapsto \mathcal{F} \left[ f \,d\sigma_{|\mathbb{S}^{d-1}} \right],
$$
where $\mathcal{F}$ is the Fourier transform and $d\sigma_{|\mathbb{S}^{d-1}}$ is the surface measure on $\mathbb{S}^{d-1}$. The Stein-Tomas theorem establishes the boundedness of this operator from $L^2(\mathbb{S}^{d-1})$ to $L^p(\mathbb{R}^d)$ if $p \geq p_{ST}$. Recently, several works have investigated the existence and structure of functions which saturate the operator norm~\cite{Shao,FO}.
The Stein-Tomas theorem can be reformulated in terms of our spectral projectors:

\begin{thm}[Stein-Tomas~\cite{Stein,Tomas}]
For any $\lambda>1$, $\delta<1$,
\begin{equation}
\label{SteinTomasinequality}
\| P_{\lambda,\delta} \|_{L^2 \to L^p} \sim \lambda^{\frac{d-1}{2} - \frac{d}{p}} \delta^{\frac{1}{2}} + \lambda^{\frac{d-1}{2} \left( \frac{1}{2} - \frac{1}{p} \right)}\delta^{\frac{d+1}{2} \left( \frac{1}{2} - \frac{1}{p} \right)} \qquad \mbox{if $2 \leq p \leq \infty$}.
\end{equation}
\end{thm}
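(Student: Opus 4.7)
The plan is to analyze the convolution kernel of the smoothed projector $P_{\lambda,\delta}^\flat$ directly on $\mathbb{R}^d$, combine two elementary endpoint bounds with the Stein-Tomas estimate at the critical exponent $p=p_{ST}$, and produce explicit near-extremizers for the lower bound. On $\mathbb{R}^d$, $P_{\lambda,\delta}^\flat$ is the Fourier multiplier with symbol $\chi((|\xi|-\lambda)/\delta)$, hence a convolution operator whose kernel is the radial integral
\[
K_{\lambda,\delta}(x) = \int_0^\infty \chi\!\left(\tfrac{r-\lambda}{\delta}\right) r^{d-1} \widehat{d\sigma}(rx)\, dr,
\]
where $d\sigma$ is the unit-sphere surface measure. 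Combining the stationary-phase bound $|\widehat{d\sigma}(\xi)| \lesssim (1+|\xi|)^{-(d-1)/2}$ with radial integration by parts exploiting the scale-$\delta$ cutoff yields
\[
|K_{\lambda,\delta}(x)| \lesssim \delta\, \lambda^{d-1}(1+\lambda|x|)^{-(d-1)/2}(1+\delta|x|)^{-N}
\]
for every $N$. In particular $\|K_{\lambda,\delta}\|_{L^\infty} \lesssim \delta\lambda^{d-1}$, while Plancherel gives $\|P_{\lambda,\delta}^\flat\|_{L^2 \to L^2} \lesssim 1$.

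For the upper bound I would use \eqref{TTstar} to reduce to convolution estimates and then interpolate among three endpoints. The trivial $L^1 \to L^\infty$ bound $\|K_{\lambda,\delta}\|_{L^\infty}\lesssim \delta\lambda^{d-1}$ together with $L^2\to L^2\lesssim 1$ produces, by Riesz-Thorin, the point-focusing term $\lambda^{(d-1)/2 - d/p}\delta^{1/2}$ at $p=\infty$, but this alone does not give the sharp bound in the geodesic-focusing range. The additional input is the critical inequality
\[
\|P_{\lambda,\delta}^\flat\|_{L^2 \to L^{p_{ST}}} \lesssim \lambda^{(d-1)/(2(d+1))} \delta^{1/2},
\]
which I would derive by radial slicing: writing $\widehat{f}(r\omega) = g_r(\omega)$ for $\omega\in \mathbb{S}^{d-1}$, $r\in[\lambda-\delta,\lambda+\delta]$, the Stein-Tomas extension theorem on each sphere $r\mathbb{S}^{d-1}$ (rescaled from the unit sphere) gives
\[
\Bigl\|\int_{\mathbb{S}^{d-1}} g_r(\omega) e^{irx\cdot\omega}\, d\sigma(\omega)\Bigr\|_{L^{p_{ST}}(\mathbb{R}^d)} \lesssim r^{-d/p_{ST}} \|g_r\|_{L^2(\mathbb{S}^{d-1})}.
\]
Integrating in $r$ by Minkowski and then Cauchy-Schwarz on the interval of length $2\delta$, and recognising $\|f\|_{L^2}^2 \sim \lambda^{d-1}\int \|g_r\|_{L^2(\mathbb{S}^{d-1})}^2\, dr$ via Plancherel, produces the displayed critical bound. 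Riesz-Thorin interpolation of this critical bound against $L^2\to L^2$ yields the geodesic-focusing term for $2\le p \le p_{ST}$, and interpolation against $L^2\to L^\infty$ yields the point-focusing term for $p_{ST}\le p\le \infty$; the two terms agree at $p=p_{ST}$ by construction.

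For the lower bound I would exhibit two test functions. For the point-focusing contribution, take $f$ with $\widehat{f} = \mathbf{1}_A$ where $A=\{\xi: \bigl||\xi|-\lambda\bigr| \le \delta\}$: Plancherel gives $\|f\|_{L^2} \sim (\lambda^{d-1}\delta)^{1/2}$, while $|f(x)| \gtrsim \lambda^{d-1}\delta$ on the ball $\{|x|\lesssim \lambda^{-1}\}$, yielding the matching lower bound $\lambda^{(d-1)/2 - d/p}\delta^{1/2}$. For the geodesic-focusing contribution, use a Knapp example: let $\widehat{f}$ be the indicator of a slab of radial thickness $\delta$ and transverse size $(\lambda\delta)^{1/2}$ tangent to the sphere of radius $\lambda$. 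The slab sits inside $A$ by the Taylor expansion $\xi_d \approx \lambda - |\xi'|^2/(2\lambda)$, has volume $\sim \lambda^{(d-1)/2}\delta^{(d+1)/2}$, and $f$ is essentially constant of that size on the dual tube of dimensions $\delta^{-1}\times (\lambda\delta)^{-(d-1)/2}$; computing the $L^p$ and $L^2$ norms directly produces the geodesic-focusing lower bound $\lambda^{\frac{d-1}{2}(\frac{1}{2}-\frac{1}{p})}\delta^{\frac{d+1}{2}(\frac{1}{2}-\frac{1}{p})}$.

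The main obstacle is the critical Stein-Tomas estimate on a single sphere, which is the classical theorem of Stein and Tomas and genuinely requires either Stein's analytic interpolation applied to the family $(1-|\xi|^2)_+^z$ or Tomas's $L^2$-orthogonality argument exploiting the decay of $\widehat{d\sigma}$. Every other step — the kernel computation, the radial slicing plus Minkowski, the two Riesz-Thorin interpolations, and the explicit extremizer computations — is routine once this input is in hand.
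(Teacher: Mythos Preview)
Your argument is correct, but it is organized differently from the paper's sketch. The paper works directly with the radial convolution kernel (written explicitly in dimension $3$ as $K_{\lambda,\delta}(r)=\delta\widehat{\chi}(\delta r)\,\lambda\sin(\lambda r)/r$), performs a dyadic decomposition in the physical variable $r\sim 2^j$, bounds each piece simultaneously in $L^1\to L^\infty$ (via $\|K^j\|_\infty$) and in $L^2\to L^2$ (via the Fourier symbol), interpolates, and sums in $j$; the sum converges for $p\neq p_{ST}$ and the endpoint is obtained by Stein's complex interpolation. The paper also remarks that the whole statement follows by the rescaling $x\mapsto x/\delta$ from the universal Sogge bound at width~$1$. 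You instead take the Stein--Tomas extension theorem on a single sphere as a black box and pass to the annulus by radial slicing (Minkowski in $r$ followed by Cauchy--Schwarz over the interval of length $2\delta$), then Riesz--Thorin interpolate the resulting critical bound against the elementary $L^2\to L^2$ and $L^2\to L^\infty$ endpoints. The paper's route is self-contained and in effect \emph{reproves} the restriction theorem through the dyadic summation, whereas your route is modular and cleanly isolates the curvature content inside the quoted restriction estimate, leaving only trivial bookkeeping for the width~$\delta$. Your lower bounds via the spherical and Knapp test functions are exactly the examples the paper describes.
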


Before sketching the proof of this theorem, we remark that the Stein-Tomas estimate above can be extended to asymptotically conic, non-trapping manifolds~\cite{GHS}; extensions to non-elliptic, constant coefficients operators can also be considered~\cite{KRS}.

\begin{proof} By scaling, this theorem is a consequence of the universal Sogge estimate~\eqref{universalsogge}; but it came first, and can be proved by simpler means. We sketch the proof in dimension 3, where formulas are cleaner. The smoothed out operator $P^\chi_{\lambda,\delta}$ defined as $\chi\left( \frac{\sqrt{-\Delta} - \lambda}{\delta} \right)$ (where $\chi \in \mathcal{C}_0^\infty$) can be expressed through a radial convolution kernel
\begin{equation}
\label{euclideankernel}
P^\chi_{\lambda,\delta} f (x) = \int_{\mathbb{R}^3} K_{\lambda,\delta}(|x-y|) f(y) \, dy, \qquad \mbox{with} \qquad
K_{\lambda,\delta}(r) = \delta \widehat{\chi} (\delta r) \lambda \frac{\sin(\lambda r)}{r}.
\end{equation}
The proof consists in splitting $K_{\lambda,\delta}$ into
$$
K_{\lambda,\delta}(r) = \sum_j K_{\lambda,\delta}^j (r), \qquad \mbox{and accordingly} \qquad P_{\lambda,\delta} = \sum_j P_{\lambda,\delta}^j,
$$
where each $K_{\lambda,\delta}^j$ is smoothly localized where $r \sim 2^j$. Then the operator norm of $P_{\lambda,\delta}^j$ can be estimated in two ways
\begin{itemize}
\item $\| P_{\lambda,\delta}^j \|_{L^1 \to L^\infty} \lesssim \| K_{\lambda,\delta}^j \|_\infty$
\item $\| P_{\lambda,\delta}^j \|_{L^2 \to L^2} \lesssim \| m_{\lambda,\delta}^j \|_\infty$, where $m_{\lambda,\delta}^j$ is the Fourier symbol of $P_{\lambda,\delta}^j$.
\end{itemize}
By interpolation, this gives a bound for $\| P_{\lambda,\delta}^j \|_{L^{p'} \to L^p}$, and, after filling out the estimates and summing over $j$, the desired bound for $\| P_{\lambda,\delta} \|_{L^{p'} \to L^p}$ if $p \neq p_{ST}$. The critical point $p_{ST}$ requires complex interpolation.
\end{proof}

Just like for Sogge's estimate~\eqref{universalsogge}, one can distinguish two regimes in~\eqref{SteinTomasinequality}:
\begin{itemize}
\item In the point-focusing regime $p \geq p_{ST}$, the dominant term is $\lambda^{\frac{d-1}{2} - \frac{d}{p}} \delta^{\frac{1}{2}}$. It is achieved up to a bounded factor for the \textit{spherical example} $f$ (the terminology coming from the radial symmetry of $f$) given in Fourier by $\widehat{f}(\xi) = \mathbf{1}_{[\lambda-\delta,\lambda+\delta]}(|x|)$, which has a sharp peak at the origin.
\item In the geodesic-focusing regime $p \leq p_{ST}$, the dominant term is $\lambda^{\frac{d-1}{2} \left( \frac{1}{2} - \frac{1}{p} \right)}\delta^{\frac{d+1}{2} \left( \frac{1}{2} - \frac{1}{p} \right)}$; it can be achieved up to a bounded factor by the so-called \textit{Knapp example} which we now describe. It is the function defined, in Fourier space, by $\widehat{f} = \mathbf{1}_R$, where $R$ is a rectangle of dimensions $\sim (\sqrt{\lambda \delta})^{d-1} \times \delta$ contained in the annulus $\{ \xi, \, \lambda - \delta < |\xi| < \lambda + \delta \}$. In physical space, it is mostly supported on the dual rectangle $\widehat{R}$, of dimensions $((\lambda \delta)^{-\frac 12})^{d-1} \times \delta$.
\end{itemize}
These two examples are depicted in Figure~\ref{euclideanpicture}, both in physical space and in Fourier space.

\begin{figure}
  \includegraphics[width=12 cm]{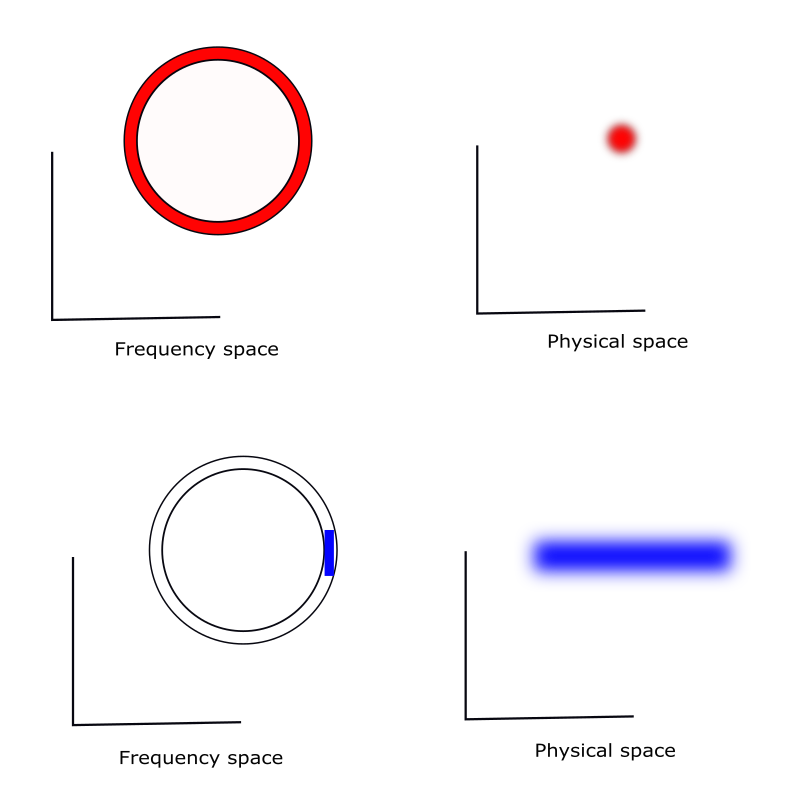}
\caption{\label{euclideanpicture}
The spherical example (red) and the Knapp example (blue) for the Euclidean space represented in frequency and physical space: the Fourier transform exchanges the left and right columns. In frequency space, the supports are contained within the annulus with inner and outer radii $\lambda-\delta$ and $\lambda + \delta$, either the full annulus or a rectangle contained in it. In physical space, the $L^p$ mass concentrates at a point or on the dual rectangle, respectively.}
\end{figure}

\subsection{Tori} We now turn to quotients of the Euclidean space, namely tori. As in the whole space, we can rely on Fourier analysis (Fourier series rather than the Fourier transform) to carry out the analysis, but as we shall see, it becomes much more challenging.

Choosing a family of independent vectors $(e_i)$ of $\mathbb{R}^d$ generating  the lattice $\Lambda = \mathbb{Z} e_1 + \dots + \mathbb{Z} e_d$, we will denote $\mathbb{T}^d = \mathbb{T}^d_\Lambda$ for the quotient $\mathbb{R}^d / (\mathbb{Z} e_1 + \dots + \mathbb{Z} e_d)$.

\subsubsection{The case $p= \infty$} A basis of eigenfunctions of the Laplacian on $\mathbb{T}^d$ is provided by complex exponentials of the type $e^{ik \cdot x}$, where $k$ belongs to the dual lattice $\Lambda^*$. It is then easy to see that
$$
\| P_{\lambda,\delta} \|_{L^1 \to L^\infty} = \# \{ k \in \Lambda^*, \, \lambda - \delta < |k| < \lambda + \delta \},
$$
which brings us to a lattice point counting problem. This is a variant of the more classical problem of estimating the number of lattive points in a sphere of radius $\lambda$:
$$
N(\lambda) = \# \{ k \in \Lambda^*, \, |k| < \lambda \}.
$$
To leading order, $N(\lambda)$ equals $C_\Lambda \lambda^{d}$, and the problem is to bound the error $E(\lambda) = \left| P(\lambda) - C_\Lambda \lambda^{d} \right|$. A full understanding of optimal bounds on $E(\lambda)$ is still missing in dimensions 2 and 3; we refer the reader  to~\cite{Goetze,Huxley,Kraetzel,Nowak} for entry points into the literature.

Though bounds on $E(\lambda)$ imply bounds on $\| P_{\lambda,\delta} \|_{L^1 \to L^\infty}$ by the above characterization, this cannot lead to an optimal answer, and the questions of estimating $E(\lambda)$ and $\| P_{\lambda,\delta} \|_{L^1 \to L^\infty}$ are not equivalent.

\subsubsection{The case $\delta = 0$ (eigenfunctions)} The earliest $L^p$ bound on eigenfunctions of the Laplacian on the torus is the famous result of Cooke-Zygmund~\cite{Cooke,Zygmund}  
that, on $\mathbb{T}^2$, an eigenfunction $\varphi$ of the Laplacian satisfies
$$
\| \varphi \|_{L^4} \lesssim \| \varphi \|_{L^2}.
$$
Whether this estimate can be extended to any $p>4$ has remained an open problem. If $\Lambda = \mathbb{Z}^d$, Bourgain~\cite{Bourgain1} conjectured the general bound for an eigenfunction $\varphi$ associated to the eigenvalue $\lambda$
\begin{equation}
\label{conjecturebourgain}
\| \varphi \|_{L^p} \lesssim \lambda^{\frac{d}{2} - 1 - \frac{d}{p}} \| \varphi \|_{L^2} \qquad \mbox{for $p > p^* = \frac{2d}{d-2}$}
\end{equation}
This conjecture was the subject of a number of subsequent papers~\cite{Bourgain2,BD1,BD2} culminating in the $\ell^2$ decoupling estimate of Bourgain and Demeter~\cite{BD3}. Though the $\ell^2$ decoupling theorem was a landmark result in harmonic analysis, it only proved the conjecture in the range $p \geq \frac{2(d-1)}{d-3}$.

\subsubsection{General values of $p$ and $\delta$}
The conjecture~\eqref{conjecturebourgain} gives $L^p$ bounds on eigenfunctions of the Laplacian on the torus. This is meaningful if $\Lambda = \mathbb{Z}^d$, but this question becomes trivial on generic tori, for which eigenspaces of the Laplacian have uniformly bounded dimension. A natural generalization consists in asking the question which is at the heart of the present review article, namely that of estimating $\| P_{\lambda,\delta} \|_{L^2 \to L^p}$. The author and Myerson~\cite{GM1} proposed the following conjecture\footnote{Notice that it implies the conjecture~\eqref{conjecturebourgain} by choosing $\Lambda = \mathbb{Z}^d$ and $\delta \sim \lambda^{-1}$.}
\begin{equation}
\label{conjectureGM}
\| P_{\lambda,\delta} \|_{L^2 \to L^p} \lesssim \lambda^{\frac{d-1}{2} - \frac{d}{p}} \delta^{\frac{1}{2}} + (\lambda \delta)^{\frac{d-1}{2} \left( \frac{1}{2} - \frac{1}{p} \right)} \qquad \mbox{if $2 \leq p \leq \infty$ and $\delta > \lambda^{-1}$}.
\end{equation}

As in the case $\delta = 1$, and in the case of the Euclidean space, the two summands on the right-hand side are achieved up to a bounded factor for eigenfunctions focusing at a point, or on a geodesic. However, the dominant term does not depend on how $p$ compares to $p_{ST}$; it is rather a condition depending on $p$, $\lambda$, and $\delta$.

\begin{itemize}
\item In the point-focusing regime, the dominant term is $\lambda^{\frac{d-1}{2} - \frac{d}{p}} \delta^{\frac{1}{2}}$. This operator norm for $P_{\lambda,\delta}$ can be achieved up to a bounded factor for functions which are evenly spread amongst all Fourier modes $k \in \Lambda^*$ such that $\lambda - \delta < |k| < \lambda + \delta$: this is the torus analog of the spherical example.
\item In the geodesic-focusing regime, the dominant term is $(\lambda \delta)^{\frac{d-1}{2} \left( \frac{1}{2} - \frac{1}{p} \right)}$. This can be achieved up to a bounded factor for functions which, in physical space, are focusing on a closed geodesic of $\mathbb{T}^d$: this is the torus analog of the Knapp example.
\end{itemize}

The spherical and Knapp examples for the torus are depicted in Figure~\ref{torusfigure}, both in physical and in frequency space.

\begin{figure}
  \includegraphics[width=12 cm]{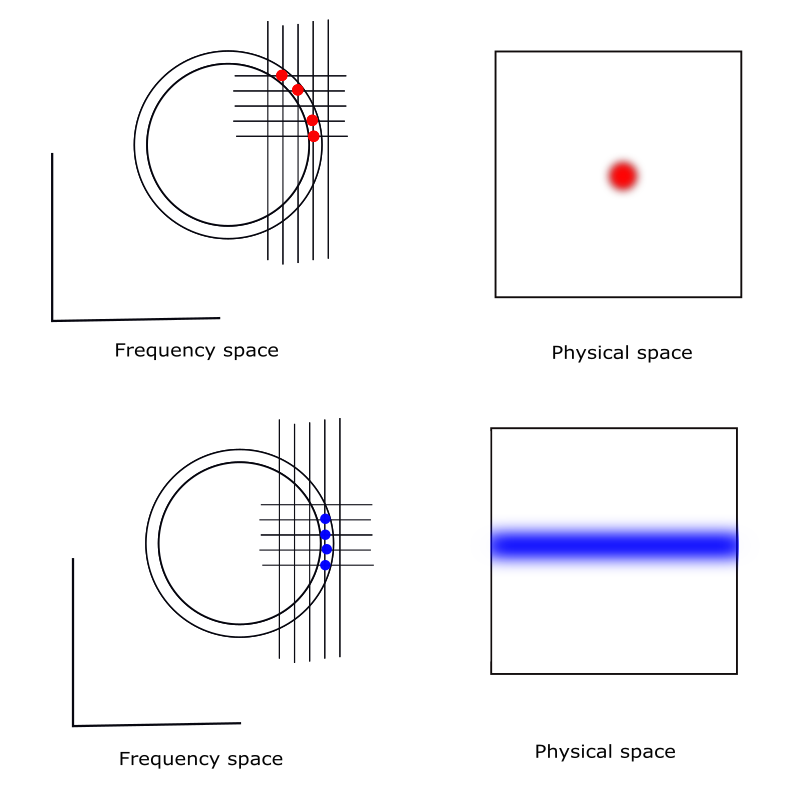}
\caption{\label{torusfigure} The spherical example (red) and the Knapp example (blue) for the torus represented in frequency and physical space: the Fourier series exchange the left and right columns. In frequency space, the supports are contained within the annulus with inner and outer radii $\lambda-\delta$ and $\lambda + \delta$, either the full annulus or a rectangle contained in it. In physical space, the $L^p$ mass concentrates close at a point or on the dual rectangle, respectively.}
\end{figure}

\bigskip

In the following theorem, we do not attempt to gather all the cases for which the conjecture is known to be true, but rather record two relevant instances where it holds. For the current state of the art, the interested reader can consult~\cite{DemeterGermain,GM3,Hickman}, besides the papers which have already been mentioned.

\begin{thm}
The conjecture~\eqref{conjectureGM} is verified
\begin{itemize}
\item[(i)] for $\Lambda = \mathbb{Z}^d$, if $p \geq \frac{2(d-1)}{d-3}$ and $d \geq 4$
\item[(ii)] for general $\Lambda$, if $p \leq p_{ST}$.\end{itemize}
\end{thm}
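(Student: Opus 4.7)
Both parts of the theorem are approached via Fourier analysis on the torus, exploiting the fact that $P_{\lambda,\delta}f$ has Fourier series supported on the discrete spectral set $E_{\lambda,\delta}=\{k\in\Lambda^*:||k|-\lambda|<\delta\}$. The core tool in both cases is the $\ell^2$ decoupling theorem of Bourgain and Demeter for the sphere. The plan is to partition $E_{\lambda,\delta}$ into caps $\theta$ of tangential scale $\sqrt{\lambda\delta}$ and radial width $\delta$, apply the decoupling inequality
\[
\|P_{\lambda,\delta}f\|_{L^p} \lesssim_\varepsilon \lambda^\varepsilon \Bigl(\sum_\theta \|f_\theta\|_{L^p}^2\Bigr)^{1/2}, \qquad 2 \leq p \leq p_{ST},
\]
and control each cap contribution through a Bernstein-type estimate $\|f_\theta\|_{L^p} \leq N_\theta^{1/2-1/p}\|f_\theta\|_{L^2}$, where $N_\theta = \#(\Lambda^*\cap\theta)$.

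For (ii), the hypothesis $p\leq p_{ST}$ places us in the sharp range of Bourgain--Demeter, and the plan is essentially complete once one inserts the universal lattice count $N_\theta\lesssim (\lambda\delta)^{(d-1)/2}$. This bound holds for any lattice of bounded covolume because, for $\delta<1$, the radial width of $\theta$ admits at most one dual lattice point per tangential fiber. Combined with the almost-orthogonality $\sum_\theta\|f_\theta\|_{L^2}^2\sim\|f\|_{L^2}^2$, this yields the conjectured bound $(\lambda\delta)^{(d-1)(1/2-1/p)/2}$; the $\lambda^\varepsilon$ loss inherent in decoupling would be removed by standard $\varepsilon$-removal, using the Euclidean Stein--Tomas inequality as a base estimate.

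For (i), the arithmetic of $\Lambda=\mathbb{Z}^d$ enters. At the $L^\infty$ endpoint, Hardy--Littlewood-type estimates on the representation counts $r_d(n) = \#\{k\in\mathbb{Z}^d:|k|^2=n\}$ give $\#E_{\lambda,\delta} = \sum_{|n-\lambda^2|<2\lambda\delta}r_d(n) \lesssim \lambda^{d-1}\delta$ (with a logarithmic loss at $d=4$), and hence by Cauchy--Schwarz $\|P_{\lambda,\delta}\|_{L^2\to L^\infty} \lesssim \lambda^{(d-1)/2}\delta^{1/2}$. Decoupling at $p=p_{ST}$ provides the other endpoint, and I would interpolate between the two to obtain a bound matching the point-focusing term $\lambda^{(d-1)/2-d/p}\delta^{1/2}$. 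The restriction $p\geq 2(d-1)/(d-3)$ corresponds to the range in which this point-focusing term dominates the geodesic-focusing one $(\lambda\delta)^{(d-1)(1/2-1/p)/2}$ in the conjecture, so that establishing the former suffices.

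The principal obstacle, specific to (i), lies in the arithmetic step: the representation numbers $r_d(n)$ fluctuate substantially, and caps aligned with rational directions on the sphere can contain anomalously many integer points. Controlling this concentration uniformly over caps---rather than on average---is what limits the sharp range of the argument to $p\geq 2(d-1)/(d-3)$, and reaching lower values of $p$ would require more refined arithmetic input than Hardy--Littlewood combined with decoupling.
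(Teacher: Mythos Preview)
Your treatment of (ii) is essentially the paper's: rescale so that the annulus becomes a $\delta/\lambda$-neighbourhood of the unit sphere, apply $\ell^2$ decoupling at $p\le p_{ST}$, then use the Hausdorff--Young/H\"older Bernstein inequality $\|g\|_{L^p}\le \|g\|_{L^2}|\operatorname{Supp}\widehat g|^{1/2-1/p}$ on each cap together with the cap count $N_\theta\lesssim(\lambda\delta)^{(d-1)/2}$ and almost orthogonality. The paper leaves the $\lambda^\epsilon$ loss in place rather than invoking $\epsilon$-removal, but otherwise the arguments coincide.

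Your sketch of (i), however, has a genuine gap. Interpolating between the $L^\infty$ endpoint $\|P_{\lambda,\delta}\|_{L^2\to L^\infty}\lesssim(\lambda^{d-1}\delta)^{1/2}$ and the $p_{ST}$ bound $(\lambda\delta)^{(d-1)/(2(d+1))}$ from part (ii) yields
\[
\|P_{\lambda,\delta}\|_{L^2\to L^p}\lesssim \lambda^{\frac{d-1}{2}-\frac{d}{p}}\,\delta^{\frac12-\frac{2}{p(d-1)}},
\]
which is weaker than the conjectured $\lambda^{\frac{d-1}{2}-\frac{d}{p}}\delta^{1/2}$ by a polynomial factor $\delta^{-2/(p(d-1))}$; at $\delta\sim\lambda^{-1}$ this is a loss of $\lambda^{2/(p(d-1))}$, which does not vanish in the stated range. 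The Bourgain--Demeter argument does not interpolate from $p_{ST}$: it applies \emph{supercritical} decoupling directly at the target exponent $p$, and then feeds in an arithmetic count of lattice points per cap (not merely the global count $\#E_{\lambda,\delta}$). Relatedly, your explanation of the threshold $p=\tfrac{2(d-1)}{d-3}$ is off: the crossover between the two terms in the conjecture occurs between $p_{ST}$ and $p^*=\tfrac{2d}{d-2}$ (depending on $\delta$), never at $\tfrac{2(d-1)}{d-3}$. That exponent is a limitation of the decoupling-plus-cap-arithmetic method, not a regime change in the conjecture.
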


\begin{proof} The first assertion is due to Bourgain and Demeter~\cite{BD3}; its proof combines the $\ell^2$ decoupling theorem with arithmetic properties of the lattice $\mathbb{Z}^d$.

The second assertion turns out to be a rather direct consequence of $\ell^2$ decoupling. To simplify notations, we focus on the case $d=2$, for which $p_{ST}=6$, and choose the lattice to be $\Lambda = \mathbb{Z}^2$. By interpolation with the trivial case $p=2$, it suffices to bound the $L^6$ norm of a function $f \in L^2(\mathbb{T}^2)$ which is Fourier supported on an annulus of inner radius $\lambda-\delta$ and outer radius $\lambda + \delta$. We write $f$ as the sum of its Fourier series $f = \sum a_k e^{2\pi i k \cdot x}$ and rescale variables to $X = \lambda x$ and $K = k/ \lambda$ in order to apply the $\ell^2$ decoupling theorem:
\begin{align*}
\| f \|_{L^6(\mathbb{T}^2)} & = \left\| \sum_{k \in \mathbb{Z}^2} a_k e^{2\pi i k \cdot x} \right\|_{L^6(\mathbb{T}^2)} 
 \lesssim \left( \frac{\delta}{\lambda} \right)^{\frac 13} \left\| \phi \left( \frac{\delta X}{\lambda} \right) \sum_{K \in \mathbb{Z}^2/\lambda}  a_{\lambda K} e^{2\pi i K \cdot X} \right\|_{L^{6}(\mathbb{R}^2)},
\end{align*}
where the cutoff function $\phi$ can be chosen to have compactly supported Fourier transform. As a result, the Fourier transform of the function on the right-hand side is supported on a $\delta/\lambda$-neighborhood of $\mathbb{S}^{d-1}$. It can be written as a sum of functions which are supported in Fourier on caps $\theta$ with dimensions $\sim \frac{\delta}{\lambda} \times \frac{\delta^{\frac 12}}{\lambda^{\frac 12}}$. These caps are almost disjoint rectangles which cover the annulus, and to which we associate a smooth partition of unity $(\chi_\theta)$. By $\ell^2$ decoupling, the $L^6$ norm above is bounded by
$$
\lesssim_\epsilon \left( \frac{\delta}{\lambda} \right)^{\frac 13 - \epsilon} \left( \sum_{\theta} \left\|  \phi \left( \frac{\delta X}{\lambda} \right)  \sum_{K \in \mathbb{Z}^2 / \lambda} \chi_\theta( K) a_{\lambda K} e^{2\pi i K \cdot X}\right\|_{L^6(\mathbb{R}^2)}^2  \right)^{\frac 12}.
$$
At this point, we use the inequality $ \| g \|_{L^p(\mathbb{R}^2)} \lesssim \| g \|_{L^2(\mathbb{R}^2)} | \operatorname{Supp} \widehat{g} |^{\frac{1}{2} - \frac{1}{p}}$, which is valid for $p \geq 2$ and follows by applying successively the Hausdorff-Young, H\"older, and the Plancherel inequalities. We use this inequality for $g_\theta(X) =  \phi \left( \frac{\delta X}{\lambda} \right)  \sum_{K}  \chi_\theta(K) a_{\lambda K} e^{2\pi i K \cdot X}$. Since each cap contains at most $\lambda^{\frac 12} \delta^{\frac 12}$ rescaled lattice points, its Fourier transform is supported on the union of at most $\lambda^{\frac 12} \delta^{\frac 12}$ balls of radius $O( \delta / \lambda) $, giving $| \operatorname{Supp} \widehat{f} | \lesssim \lambda^{-\frac 12}\delta^{\frac 32} $. Thus the $L^6$ norm we are trying to bound is less than
\begin{align*}
 & \lesssim  \left( \frac{\delta}{\lambda} \right)^{\frac 1 3 -\epsilon} (\lambda^{-\frac 12}\delta^{\frac 32})^{\frac 13} 
 \left( \sum_{\theta} \left\|  \phi \left( \frac{\delta X}{\lambda} \right)  \sum_{K \in \mathbb{Z}^2 / \lambda} \chi_\theta(\lambda K) a_{\lambda K} e^{2\pi i K \cdot X}\right\|_{L^2(\mathbb{R}^2)}^2  \right)^{\frac 12} 
 \end{align*}
 By almost orthogonality and periodicity of the Fourier series, this is bounded by $\lambda^{\epsilon} (\lambda \delta)^{\frac 16} \| f \|_{L^2(\mathbb{T}^2)}$, which is the desired estimate.
\end{proof}

Before completing this section on Euclidean tori, we mention two related directions. First, the two-dimensional Euclidean cylinder is considered in~\cite{GM2}; in this case, there are no difficulties related to number theoretical questions, and the conjecture~\eqref{conjectureGM} could be proved. Second, Euclidean tori are the simplest examples of manifolds with integrable geodesic flow; this broader class was investigated in~\cite{Bourgain0,TothZelditch}.

\section{The hyperbolic space and its quotients}

\subsection{Eigenfunctions of compact manifolds of nonpositive curvature}

Manifolds of nonpositive curvature form a much broader class than the hyperbolic space and its quotients (it even includes Euclidean tori), and a natural context in which improvements to the H\"ormander bound~\eqref{classicalbound} are expected. A long line of research~\cite{Berard,HR,HT,BS1,BS2,BS} focused on this question through microlocal analysis, ultimately resulting in the following theorem.

\begin{thm} If the manifold $M$ is compact and has nonnegative curvature, then, denoting its eigenfunctions and eigenvalues by $(\varphi_j,\lambda_j)$, there holds
$$
\| \varphi_j \|_{L^p} \lesssim \lambda_j^{\gamma(p)} (\log \lambda_j)^{-\kappa},
$$
where $\kappa = \kappa(p) =\frac{1}{2}$ for $p>p_{ST}$, and $\kappa > 0$ for $p \in (2,p_{ST}]$.
\end{thm}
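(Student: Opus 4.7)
The statement as displayed says ``nonnegative curvature,'' but this must be a typographical slip for \emph{nonpositive} curvature: the round sphere $\mathbb{S}^d$ has positive sectional curvature, yet its zonal and highest-weight harmonics saturate Sogge's bound with no logarithmic gain, contradicting the displayed inequality. Both the surrounding text of the section and the cited references (B\'erard, Hassell--Tacy, Blair--Sogge) concern the nonpositive-curvature case, so I proceed under that hypothesis, which is the only one under which the conclusion can hold.

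The guiding idea, going back to B\'erard, is to trade smallness of $\delta$ for a long time interval of the wave equation, exploiting the fact that Hadamard's theorem makes the universal cover $\widetilde{M}$ globally diffeomorphic to $\mathbb{R}^d$ with a dispersive wave kernel for all times, not merely up to the injectivity radius. I write $\varphi_j = \rho_T\bigl(\sqrt{-\Delta}-\lambda_j\bigr)\varphi_j$ with $\rho_T$ chosen so that $\widehat{\rho_T}$ is a smooth cutoff supported in $[-T,T]$ and equal to one near $0$, and expand the kernel through the Fourier inversion formula as an integral of the half-wave propagator $e^{it\sqrt{-\Delta}}$ over $t\in[-T,T]$. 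Lifting to $\widetilde{M}$, this propagator splits as a sum over the deck group $\Gamma=\pi_1(M)$, and the Hadamard parametrix on $\widetilde{M}$ gives pointwise bounds on each summand that do not deteriorate for $|t|\lesssim T$. The number of $\gamma\in\Gamma$ with $\operatorname{dist}(x,\gamma x)\le T$ is at worst exponential in $T$ under strictly negative curvature (polynomial in the flat directions) and, crucially, is offset by the matching decay of the dispersive factor, so the net cost is a polynomial-in-$T$ loss.

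For the point-focusing range $p>p_{ST}$, I then estimate the reproducing kernel on and off the diagonal and interpolate between the trivial $L^2\to L^2$ bound and an $L^1\to L^\infty$ bound improved by a factor of $T^{-1/2}$. Optimizing the parameter as $T\sim c\log\lambda_j$ yields the exponent $\kappa=\tfrac{1}{2}$; this is B\'erard's original argument, giving the $(\log\lambda_j)^{-1/2}$ gain in the $L^\infty$ case~\cite{Berard}, which extends by Sogge-style interpolation through the Carleson--Sj\"olin step to all $p>p_{ST}$.

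The geodesic-focusing range $2<p\le p_{ST}$ is genuinely harder, which is why the theorem states only $\kappa>0$ without an explicit value. Here the plan is to follow Hassell--Tacy and then Blair--Sogge~\cite{HT,BS1,BS2,BS}: reduce $\|\varphi_j\|_{L^p}$ to a Kakeya--Nikodym quantity, namely the supremum over unit-length geodesic segments $\gamma$ of the tubular mass $\int_{T_{\lambda_j^{-1/2}}(\gamma)}|\varphi_j|^2$, and then prove a logarithmic improvement in this tubular $L^2$ mass by exploiting the non-focusing of geodesic tubes under nonpositive curvature. The main obstacle is precisely this last step: one must control how tubes around distinct $\Gamma$-translates of $\gamma$ overlap in a length-$T$ window, which requires quantitative bi-angle estimates on $\widetilde{M}$ and, in the sharpest versions, a bilinear or square-function decomposition along tubes, balanced again at the optimal scale $T\sim c\log\lambda_j$. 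It is this tube-overlap analysis, rather than the kernel analysis of the first half, that I expect to be the real technical bottleneck and that accounts for the $p$-dependent, unspecified $\kappa$ in the geodesic regime.
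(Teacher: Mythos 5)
You are right that ``nonnegative'' is a typo for \emph{nonpositive} curvature, and your sketch follows exactly the route of the works the paper cites for this theorem, which is stated there without proof: B\'erard's long-time wave parametrix on the universal cover with $T\sim c\log\lambda_j$ (extended to $p>p_{ST}$ \`a la Hassell--Tacy) for the point-focusing range, and the Kakeya--Nikodym reduction of Sogge and Blair--Sogge for $2<p\le p_{ST}$. The only quibble is bookkeeping: the sum over deck transformations costs $e^{CT}$ rather than a polynomial in $T$, and the argument closes because this becomes $\lambda_j^{Cc}$, which is absorbed by the main term once $c$ is small --- but this does not change the structure or the conclusion $\kappa=\tfrac12$.
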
 

The above theorem is fundamental in the analysis of the spectral resolution of the Laplacian: it deals with the hardest case, that of compact manifolds, and it does so without any symmetry assumptions, under the single geometric condition of nonpositive curvature. Similar improvements can actually be obtained in even more general contexts, see~\cite{CG0,CG2,CG3}.

However, stronger bounds are expected: under chaoticity assumptions for the geodesic flow, the \textit{random wave model} of Berry~\cite{Berry} leads to predicting a bound $\sqrt{\log \lambda_j}$ (see for instance~\cite{Steinerberger} for the heuristic argument, and~\cite{ABST} for numerics which appear to support this conjecture).

Such a bound seems utterly out of reach of current analytic methods. However, in the hope of achieving some progress, we will now shift the focus and consider a more restricted class of examples, namely the hyperbolic space and its quotients, allowing also for infinite volume.

\subsection{The hyperbolic space} The question of estimating $\| P_{\lambda,\delta} \|_{L^2 \to L^p}$ can be fully answered for the hyperbolic space. After partial results were obtained in~\cite{HuangSogge0}, this was done in~\cite{CH}, and the optimal dependence of the bounds on $p$ was proved in~\cite{GL}.

\begin{thm}\label{thmhyperbolic} If $M$ is the hyperbolic space $\mathbb{H}^d$,
$$
\| P_{\lambda,\delta} \|_{L^2 \to L^p} \lesssim [(p-2)^{-1} + 1 ] \lambda^{\gamma(p)} \delta^{\frac{1}{2}}.
$$
\end{thm}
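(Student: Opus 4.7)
The plan is to follow the Stein--Tomas strategy sketched for the Euclidean case, adapted to the hyperbolic setting by exploiting the fact that $\mathbb{H}^d$ is a rank-one noncompact symmetric space, so $\sqrt{-\Delta}$ is bi-invariant under the isometry group. This lets us realize the smoothed projector as a radial convolution,
$$
P^\flat_{\lambda,\delta} f(x) = \int_{\mathbb{H}^d} K_{\lambda,\delta}(d(x,y)) f(y)\, dy,
\qquad K_{\lambda,\delta}(r) = \int_\mathbb{R} \chi\!\left(\frac{\mu - \lambda}{\delta}\right) \varphi_\mu(r)\, |\mathbf{c}(\mu)|^{-2}\, d\mu,
$$
where $\varphi_\mu$ is the Harish--Chandra spherical function and $|\mathbf{c}(\mu)|^{-2}$ is the Plancherel density, comparable to $\mu^{d-1}$ for large $\mu$. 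The target bound $\lambda^{\gamma(p)}\delta^{1/2}$ matches the universal lower bound \eqref{universallowerbound}, so the strategy is to show the hyperbolic kernel is just as favorable as the Euclidean one, with no loss coming from the global geometry.

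Next, I would invoke the classical asymptotic expansion for $r \gtrsim 1$,
$$
\varphi_\mu(r) = \mathbf{c}(\mu)\, e^{-(\rho - i\mu)r} + \mathbf{c}(-\mu)\, e^{-(\rho + i\mu)r} + \text{(smooth remainder)},
$$
with $\rho = (d-1)/2$, and use stationary phase in $\mu$ after substitution. This yields, for $r \sim 2^j$ with $j \geq 0$, an estimate of the form
$$
|K_{\lambda,\delta}(r)| \lesssim \delta\, \lambda^{(d-1)/2}\, e^{-\rho r}\, \min\!\bigl(1,(\lambda r)^{-1/2}\bigr),
$$
together with further decay once $r \gtrsim \delta^{-1}$ from the smoothness of $\chi$. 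The crucial point is that the decay $e^{-\rho r}$ is precisely the square root of the hyperbolic volume element $e^{(d-1)r}$, so volume growth and spherical-function decay are in perfect balance. For $r \lesssim 1$ the kernel mimics the Euclidean kernel of \eqref{euclideankernel}.

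Having the kernel in hand, dyadically decompose $K_{\lambda,\delta} = \sum_{j \geq 0} K^j_{\lambda,\delta}$ with $K^j$ supported where $r \sim 2^j$, and for each piece derive the two standard bounds: $\|P^j\|_{L^1 \to L^\infty} \lesssim \|K^j\|_\infty$ from the pointwise kernel estimate above, and $\|P^j\|_{L^2 \to L^2} \lesssim \|m^j\|_\infty$ where $m^j$ is the spherical transform of $K^j$, bounded via Plancherel on $\mathbb{H}^d$. Interpolation gives $L^{p'} \to L^p$ bounds on each $P^j$ (the case $p=p_{ST}$ requires complex interpolation of an analytic family, exactly as in the Euclidean proof). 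Summation over $j$ converges thanks to the $e^{-\rho r}$ decay offsetting the volume growth, and $TT^*$ \eqref{TTstar} yields the claimed $L^2 \to L^p$ bound.

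The main obstacle is twofold. First, obtaining the sharp dependence $(p-2)^{-1} + 1$ on the exponent $p$, which is the novelty of \cite{GL} compared with \cite{CH}: this requires tracking how the interpolation between the $L^1 \to L^\infty$ and $L^2 \to L^2$ bounds degrades as $p \to 2$, and summing the dyadic contributions with uniform (not $p$-dependent) constants, which forces one to use a norm based on a Lorentz/atomic decomposition rather than naive summation. Second, handling the critical exponent $p = p_{ST}$ requires a Stein-style complex interpolation where the $L^2 \to L^2$ endpoint must exploit the oscillation of the spherical-function asymptotics along characteristics, a step that is clean on $\mathbb{H}^d$ only because the Harish--Chandra formulas are explicit enough to feed into stationary phase without additional error terms.
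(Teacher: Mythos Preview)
Your proposal is correct and follows essentially the same approach as the paper's proof: express the smoothed projector as a radial convolution (the paper specializes to $d=3$, where the kernel is explicitly $\delta\,\widehat{\chi}(\delta r)\,\lambda\,\sin(\lambda r)/\sinh r$, while you work with the Harish--Chandra spherical functions in general dimension), then split dyadically in $r$, interpolate between $L^1\to L^\infty$ and $L^2\to L^2$ bounds on each piece, and use complex interpolation at $p=p_{ST}$. You also correctly isolate the two delicate points---the sharp $(p-2)^{-1}$ dependence as $p\to 2$ and the critical exponent---which are precisely the refinements attributed to \cite{GL} and \cite{CH}.
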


The reader should by now not be surprised that the upper bound in this theorem comprises two regimes, as reflected in the definition of $\gamma(p)$ in~\ref{formulagamma}: these are of course the point-focusing and the geodesic-focusing regime. Just like in the Euclidean case, a spherical example and a Knapp example can be constructed, which nearly saturate the bounds in the point-focusing and geodesic-focusing regimes respectively. These examples are depicted in Figure~\ref{figurehyperbolic}, both in physical space (hyperbolic space) and in frequency space (the appropriate analog of the Fourier transform being provided by the Helgason-Fourier transform).

\begin{figure}
  \includegraphics[width=12 cm]{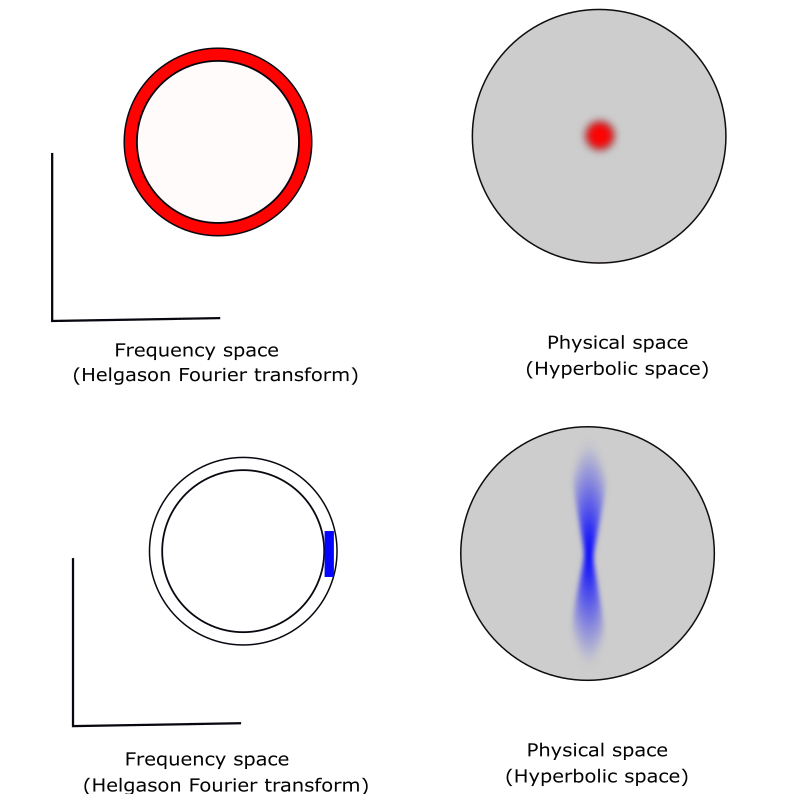}
\caption{\label{figurehyperbolic}
The spherical example (red) and the Knapp example (blue) for the hyperbolic space represented in frequency and physical space. On the physical side, we use the Poincar\'e disk model to represent the hyperbolic space; and on the frequency side, it is the Fourier-Helgason transform which provides the dual picture. In frequency space, the supports are contained within the annulus with inner and outer radii $\lambda-\delta$ and $\lambda + \delta$, either the full annulus or a rectangle contained in it. In physical space, the $L^p$ mass concentrates at a point, or along a geodesic; notice how the instability of geodesics on the hyperbolic space leads to spreading at the ends of the Knapp example.}
\end{figure}

 Comparing this bound to the Euclidean case or the sphere, it appears that the strongest bound is reached in the hyperbolic case, the weakest in the spherical case, with the Euclidean case being a middle ground. This can be explained, at least partially, in relation to the stability of geodesics: the more stable geodesics are, the easier it is for almost-eigenfunctions to focus on them, and therefore the weaker the bounds for $\| P_{\lambda,\delta} \|_{L^2 \to L^p}$.

\begin{proof}[Proof of Theorem~\ref{thmhyperbolic}] We sketch the proof in dimension 3, where formulas are cleaner. The operator $P_{\lambda,\delta}$ can then be expressed through a radial convolution kernel
\begin{equation*}
P_{\lambda,\delta} f (x) = \int_{\mathbb{H}^3} K_{\lambda,\delta}(\operatorname{dist}(x,y)) f(y) \, dy, \qquad \mbox{with} \qquad
K_{\lambda,\delta}(r) = \delta \widehat{\chi} (\delta r) \lambda \frac{\sin(\lambda r)}{\sinh r}.
\end{equation*}
This is quite similar to the Euclidean case~\eqref{euclideankernel} over small distances, but over large distances, the kernel decays exponentially. Nevertheless, it is possible to follow closely the scheme of the Euclidean proof: split the kernel dyadically, estimate all summands in the $L^2 \to L^p$ topology, and then conclude through real or complex interpolation.
\end{proof}

\subsection{Infinite area quotients} 
\label{infvol}
Only the case of dimension 2 has been explored so far, with the following theorem, which essentially asserts that the bounds are identical to the case of the hyperbolic space, up to possible supbolynomial factors.

\begin{thm}[\cite{AnkerGermainLeger}]
Consider a geometrically finite hyperbolic quotient $M = \Gamma \backslash \mathbb{H}^2$ of the hyperbolic space by a Fuchsian subgroup. For any $\epsilon,N > 0$,
$$
\|P_{\lambda,\delta}\|_{L^2\to L^p}\lesssim_{p,\epsilon}
\lambda^{\gamma(p)+\epsilon}\delta^{\frac12} \quad \mbox{if $\delta > \lambda^{-N}$}.
$$
Furthermore, $\epsilon$ and $N$ can be taken equal to $0$ and $\infty$ respectively if the exponent of convergence of $N$ is $< \frac12$.
\end{thm}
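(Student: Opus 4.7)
The plan is to lift to the universal cover $\mathbb{H}^2$ and exploit the orbital-sum structure of the kernel. Writing $K^{\mathbb{H}^2}_{\lambda,\delta}(r)$ for the radial Schwartz kernel of a smoothed projector $P^\flat_{\lambda,\delta}$ on $\mathbb{H}^2$, the kernel on $M=\Gamma\backslash\mathbb{H}^2$ is the periodization
\[
K^M_{\lambda,\delta}(x,y) \;=\; \sum_{\gamma\in\Gamma} K^{\mathbb{H}^2}_{\lambda,\delta}(d(x,\gamma y)).
\]
The dimension-two analogue of the kernel identity used in the proof of Theorem~\ref{thmhyperbolic} gives the schematic bound $|K^{\mathbb{H}^2}_{\lambda,\delta}(r)|\lesssim \lambda^{1/2}\delta\, e^{-r/2}\langle\delta r\rangle^{-M}$ with oscillating phase $e^{i\lambda r}$, so the kernel is effectively supported on $r\lesssim 1/\delta$ and carries the characteristic hyperbolic $e^{-r/2}$ decay.

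Let $\delta_\Gamma$ denote the exponent of convergence of $\Gamma$. In the easier regime $\delta_\Gamma<1/2$, the Patterson--Sullivan orbital counting estimate $\#\{\gamma\in\Gamma:d(x,\gamma y)\le R\}\lesssim e^{\delta_\Gamma R}$ is strictly dominated by the kernel decay, so the orbital sum converges absolutely:
\[
\sum_{\gamma} |K^{\mathbb{H}^2}_{\lambda,\delta}(d(x,\gamma y))| \;\lesssim\; \lambda^{1/2}\delta\int_0^\infty e^{(\delta_\Gamma-1/2)r}\,dr \;\lesssim\; \lambda^{1/2}\delta.
\]
One then reruns the dyadic-in-$r$ decomposition of the proof of Theorem~\ref{thmhyperbolic} on $M$ term by term; each dyadic scale picks up the same convergent geometric factor, and interpolation delivers the full range $2\le p\le \infty$ with $\epsilon=0$ and no restriction on $\delta$.

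In the general case $\delta_\Gamma\in[1/2,1)$ the triangle inequality loses, since the relevant range $r\lesssim 1/\delta$ carries $\sim e^{\delta_\Gamma/\delta}$ orbit points, exponentially large in $\lambda^N$; cancellation must come from the phase $e^{i\lambda r}$. The plan is to use the half-wave representation
\[
P^\flat_{\lambda,\delta} \;=\; \int_{\mathbb{R}} \hat\chi(\delta t)\, e^{-i\lambda t}\, e^{it\sqrt{-\Delta}}\,dt,
\]
truncate the $t$-integral at a polynomial time $T=\lambda^{O(1)}$ (the tail becomes Schwartz-small once $\delta t\gg 1$, which is precisely where the restriction $\delta>\lambda^{-N}$ enters), and use finite propagation speed to reduce the truncated operator on $M$ to an orbital sum over $d(x,\gamma y)\le T$. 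The resulting oscillatory sum is controlled by polynomial bounds on the distribution of the length spectrum of geometrically finite Fuchsian groups (equivalently, on the resolvent of $-\Delta$ along horizontal strips), coming from Patterson--Sullivan theory and the Mazzeo--Melrose / Guillop\'e--Zworski framework for meromorphic continuation. The polynomial (rather than sharp) nature of these bounds is the source of the $\lambda^\epsilon$ loss.

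The principal obstacle is exactly this last step: extracting effective cancellation from $\sum_\gamma e^{i\lambda d(x,\gamma y)}\psi(d(x,\gamma y))$ at scales up to $1/\delta$ when $\delta_\Gamma\ge 1/2$. This is where the fine geometry of $\Gamma$ enters beyond the single number $\delta_\Gamma$, and where one must invoke the spectral-geometric machinery developed for resonances on infinite-area hyperbolic surfaces; without some such input the orbital sum would overwhelm the single-copy contribution and the theorem would fail.
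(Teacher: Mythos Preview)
Your approach is genuinely different from the paper's, and the difference matters most in the general case $\delta_\Gamma\ge\tfrac12$.

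\medskip

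\textbf{What the paper does.} The paper does \emph{not} lift to $\mathbb{H}^2$ and sum over $\Gamma$. Instead it uses the structure theorem for geometrically finite surfaces to split $M$ into a compact core and hyperbolic funnels, and it represents $P^\flat_{\lambda,\delta}$ through the \emph{Schr\"odinger} group $e^{it\Delta}$ (not the half-wave). On the compact core, the decisive input is the local smoothing estimate $\|\chi e^{it\Delta}f\|_{L^2_tL^2_x}\lesssim\lambda^{-1/2+\epsilon}\|f\|_{L^2}$, which the paper obtains from the resolvent bound of Bourgain--Dyatlov (fractal uncertainty principle). Combined with Sogge's universal estimate via $P_{\lambda,1}\chi_c P^\flat_{\lambda,\delta}$, this gives $\lambda^{\gamma(p)+\epsilon}\delta^{1/2}$ on the core. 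On each funnel, $\chi_{hf}e^{it\Delta}f$ solves an inhomogeneous Schr\"odinger equation with compactly supported forcing controlled by local smoothing, and one closes using dispersive/Strichartz/smoothing estimates proved directly on the hyperbolic cylinder.

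\medskip

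\textbf{The easy case $\delta_\Gamma<\tfrac12$.} Your orbital-sum argument is natural here and the pointwise bound you write is correct, so the $L^1\to L^\infty$ side of the interpolation goes through. But the sentence ``rerun the dyadic-in-$r$ decomposition on $M$ term by term'' hides a real issue: on $\mathbb{H}^2$ the $L^2\to L^2$ bound for each dyadic piece comes from the spherical Fourier multiplier, and the dyadic pieces on $M$ are \emph{not} spectral multipliers of $\Delta_M$, so that argument does not transfer. You need a separate mechanism for the $L^2$ endpoint on the quotient (e.g.\ an unfolding argument exploiting that only $O(1)$ translates of a fundamental domain meet a ball of bounded radius, combined with almost-orthogonality across $\gamma$). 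This can be repaired, but it is not automatic.

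\medskip

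\textbf{The general case.} Here there is a genuine gap. You correctly identify that the triangle inequality loses and that cancellation in $\sum_\gamma e^{i\lambda d(x,\gamma y)}$ is needed, but the tools you invoke --- Patterson--Sullivan counting, Mazzeo--Melrose meromorphic continuation, Guillop\'e--Zworski resonance bounds --- give the analytic structure of the resolvent and polynomial growth of the resonance counting function; they do \emph{not} by themselves yield the quantitative high-energy resolvent bound $\|\chi(\Delta-\lambda^2-i0)^{-1}\chi\|_{L^2\to L^2}\lesssim\lambda^{-1+\epsilon}$ that is equivalent to the cancellation you need. That bound is precisely the content of Bourgain--Dyatlov, and it is the crucial ingredient the paper uses. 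Without it (or an equivalent input), your orbital-sum strategy for $\delta_\Gamma\ge\tfrac12$ does not close: the ``polynomial bounds on the length spectrum'' you allude to control how many closed geodesics there are, not how their lengths are distributed modulo $2\pi/\lambda$, which is what governs the oscillatory sum.
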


\begin{proof} 
\underline{Decomposition into compact and unbounded parts.}
By a structure theorem for hyperbolic surfaces~\cite{Borthwick}, $M$ can be written as the union of a compact part, hyperbolic ends (identical to hyperbolic cylinders) and cusps. It is easy to see that cusps prevent boundedness of the projectors, and thus we only consider the case when they are absent of the decomposition. We now consider a partition of unity
$$
\chi_{c}(x) + \sum_{j=1}^N \chi_{hf}^j(x) = 1 \qquad \mbox{for all $x \in M$},
$$
where $\chi_c$ is compactly supported, the supports of the $\chi_{hf}^j$ are disjoint, and $M$ reduces to a hyperbolic funnel (halves of hyperbolic cylinders) on the support of each $\chi_{hf}^j$.

At this point, it is convenient to replace $P_{\lambda,\delta}$ by a smoothed-out version of this projector (which is indifferent as far as $L^2 \to L^p$ bounds are concerned), and to express it through the Schr\"odinger group
\begin{equation}
\label{projectorfromschrodinger0}
P_{\lambda,\delta}^\flat = \chi_0 \left( \frac{\Delta + \lambda^2}{\lambda \delta}\right) = \frac{\lambda \delta}{\sqrt{2\pi}} \int \widehat{\chi_0} (\lambda \delta t) e^{i\lambda^2 t}  e^{it \Delta} \,dt,
\end{equation}
where $\chi_0$ is a smooth cutoff function. Combining this formula with the partition of unity defined earlier, we will rely on the decomposition (simplifying slightly the formulas)
$$
P_{\lambda,\delta}^\flat = P_{\lambda,1} \chi_c(x)  P_{\lambda,\delta}^\flat + P_{\lambda,1} \chi_{hf}(x) P_{\lambda,\delta}^\flat.
$$

\medskip
\noindent
\underline{Bounding the compact part.} The crucial ingredient is the local $L^2$ smoothing estimate, which follows from Bourgain-Dyatlov~\cite{BourgainDyatlov}. As explained in Subsection~\ref{localsmoothing} below, this results in the local smoothing bound
$$
\| \chi_c(x) e^{it\Delta} f \|_{L^2_t L^2_x} \lesssim \lambda^{-\frac{1}{2} + \epsilon} \| f \|_{L^2_x}
$$
(for any $\epsilon>0$). Combining this bound with formula~\eqref{projectorfromschrodinger0}, and the Cauchy-Schwarz inequality, we see that
$$
\|  \chi_c(x)  P_{\lambda,\delta}^\flat \|_{L^2} \lesssim \lambda \delta \left\| \chi_c(x) \int \widehat{\chi_0} (\lambda \delta t) e^{i\lambda^2 t} P_{\lambda} e^{it \Delta} \,dt \right\|_{L^2} \lesssim (\lambda \delta)^{\frac{1}{2}} \| \chi_c(x) e^{it\Delta} f \|_{L^2_t L^2_x} \lesssim \lambda^\epsilon \delta^{\frac{1}{2}} \| f \|_{L^2_x}.
$$
Using in addition the Sogge theorem, we can bound
$$
\left\| P_{\lambda,1} \chi_c(x)  P_{\lambda,\delta}^\flat f \right\|_{L^p} \lesssim \lambda^{\gamma(p)} \left\|  \chi_c(x) P_{\lambda,\delta}^\flat f \right\|_{L^2} \lesssim \lambda^{\gamma(p)+\epsilon} \delta^{\frac{1}{2}}.
$$

\medskip
\noindent
\underline{Bounding the hyperbolic part.} To treat the contribution of the hyperbolic ends, the idea is to write 
$$
\chi_{hf}(x) \chi_0 \left( \frac{\Delta + \lambda^2}{\lambda \delta}\right) = \frac{\lambda \delta}{\sqrt{2\pi}} \int \widehat{\chi_0} (\lambda \delta t) e^{i\lambda^2 t} P_{\lambda} \chi_{hf}(x) e^{it \Delta} \,dt.
$$
Then, the function $u = \chi_{hf}^j(x) e^{it \Delta}$ is supported on the $j$-th funnel where it satisfies the Schr\"odinger equation
$$
i \partial_t u - \Delta u = [\Delta, \chi_{hf}^j ] e^{it\Delta} f.
$$
The source term (on the right) is controlled by the local smoothing estimates; in order to control $u$, there remains to derive sufficiently sharp Strichartz, smoothing, and dispersive estimates on the funnel.
\end{proof}

\subsection{Finite area quotients} In the case where $\Gamma \backslash \mathbb{H}^2$ has finite area, Iwaniec and Sarnak~\cite{IwaniecSarnak1995}, see also the review~\cite{Sarnak}, conjectured the following: any eigenfunction $\phi_{\lambda_j}$ associated to the eigenvalue $\lambda_j$ satisfies, for $K$ compact and any $\epsilon>0$
$$
\| \varphi_{\lambda_j} \|_{L^\infty(K)} \lesssim \lambda_j^\epsilon \| \phi_\lambda \|_{L^2}
$$
(which reduces to $\| \varphi_\lambda \|_{L^\infty(X)} \lesssim \lambda^\epsilon$ on a compact surface, for a normalized eigenfunction). 

This conjecture should be regarded as very hard, since, for $p=\infty$ and in the particular case of the modular surface, it implies the Lindel\"of hypothesis~\cite{sarnak0}. We refer to~\cite{ButtcaneKhan2017,Humphries2018,HumphriesKhan2022} for progress on this problem in the case of arithmetic manifolds.

Note that a markedly different behavior can occur in dimension $d = 3$, as evidenced by the result in~\cite{RudnickSarnak}.

\section{Classical estimates implying spectral projector bounds}

In this section, we examine how estimates for the spectrum of the Schr\"odinger operator (Weyl law), or the decay of the Schr\"odinger group (smoothing estimates, dispersive estimates, Strichartz estimates) imply bounds for the spectral projectors $P_{\lambda,\delta}$ in various regimes.

\subsection{Weyl law} \label{sectionweyl}
If $M$ is compact, let $(\lambda_j^2)$ denote the eigenvalues and $(\varphi_j)$ the normalized eigenfunctions of the Laplace-Beltrami operator. The counting function enjoys the following asymptotics~\cite{Ivrii,SoggeHangzhou}
$$
N(\lambda) = \# \{ \lambda_j^2 \; \mbox{eigenvalue of $-\Delta$ with}\; \lambda_j < \lambda \} = C \lambda^d + O(\lambda^{d-1})
$$
(for a constant $C$ corresponding to the volume of the unit cotangent bundle). 
This estimate is sharp for spheres and Zoll manifolds, but the error term is expected to be smaller than $O(\lambda^{d-1})$ for "most" manifolds. Known improvements are as follows: $o(\lambda^{d-1})$ if the set of closed geodesics has measure zero~\cite{DG}, $O(\lambda^{d-1} (\log \lambda)^{-1})$ if $M$ has negative curvature~\cite{Berard}, and  $O(\lambda^{d-1-\delta})$ (for some $\delta>0$ depending on $M$) if $M$ is integrable~\cite{CdV}.

Local Weyl laws refer to (pointwise) estimates on the function
$$
N_x(\lambda) = \sum_{\lambda_j < \lambda} |\varphi_j(x)|^2, \qquad \mbox{with $x \in M$ and $\lambda >0$}.
$$
For any compact manifold~\cite{Hormander,SZ},
\begin{equation}
\label{lWb}
N_x(\lambda) = C \lambda^d + O(\lambda^{d-1}) \qquad \mbox{(uniformly in $x$)}.
\end{equation}
To see the connection to operator norms of $P_{\lambda,\delta}$, observe that the kernel of $P_{\lambda,\delta}$ is given by $K_{\lambda,\delta}(x,y) = \sum_{\lambda-\delta < \lambda_j < \lambda + \delta} \varphi_j(x) \varphi_j(y)$.
By the Cauchy-Schwarz inequality, 
$$
|K_{\lambda,\delta}(x,y)| \leq \sqrt{N_x(\lambda+\delta)-N_x(\lambda-\delta)}\sqrt{N_y(\lambda+\delta)-N_y(\lambda-\delta)}.
$$
Using in addition the bound~\eqref{lWb} gives the theorem of Sogge~\eqref{universalsogge} for $p=\infty$, namely 
$$
\| P_{\lambda,1} \|_{L^1 \to L^\infty} \lesssim \lambda^{d-1}.
$$
 Any improvement of the error term in~\eqref{lWb} leads to an improved estimate for $\| P_{\lambda,\delta} \|_{L^1 \to L^\infty}$.

\subsection{Local smoothing estimates} 
\label{localsmoothing}
Assume that the resolvent $R(\lambda) = (\Delta - \lambda^2 - i0)^{-1}$ enjoys the local smoothing bound
$$
\| \chi R(\lambda) \chi \|_{L^2 \to L^2} \lesssim \lambda^{-1+\epsilon};
$$
here, $\epsilon \geq 0$ and $\chi$ is a localized function which can be compactly supported or decay at a given rate depending on the context. It follows from Kato's theorem that the Schr\"odinger group also exhibits local smoothing:
\begin{equation}
\label{localsmoothing}
\left\| \chi P_{\lambda,1} e^{-it\Delta} f \right\|_{L^2_t L^2(M)} \lesssim \lambda^{-\frac{1}{2}+\epsilon} \| f\|_{L^2}.
\end{equation}
Bounds of this type were proved for various infinite volume manifolds, in particular asymptotically Euclidean or hyperbolic, basic references include~\cite{ConstantinSaut,Doi1,Doi2,KatoYajima}.

It is then possible to follow ideas similar to those presented in Subsection~\ref{infvol}.

\subsection{Dispersive estimates} Assume that dispersive estimates hold on $M$ with the same decay rate as on Euclidean space 
\begin{equation}
\label{dispersive}
\| e^{it\Delta} \|_{L^{p'} \to L^p} \lesssim |t|^{\frac{d}{p} - \frac{d}{2}}, \qquad p\geq 2
\end{equation}
(see the review~\cite{Schlag} and also~\cite{SchlagSofferStaubach1,SchlagSofferStaubach2}). Expressing again the smoothed-out operator $P_{\lambda,\delta}^\flat$ in terms of the Schr\"odinger group, it can be decomposed into
$$
P_{\lambda,\delta}^\flat = \frac{\lambda \delta}{\sqrt{2\pi}} \int \widehat{\chi} (\lambda \delta t) e^{i\lambda^2 t} e^{it\Delta} \,dt
\sim \frac{\lambda \delta}{\sqrt{2\pi}} \left[ \int_{|t| \lesssim \frac{1}{\lambda}} +  \int_{\frac{1}{\lambda} \lesssim |t| \lesssim \frac{1}{\lambda \delta}} \right] e^{i\lambda^2 t} e^{it\Delta} \,dt = P_{small} + P_{large}
$$
(here, integration bounds such as $|t| \lesssim \frac{1}{\lambda}$ should be understood in terms of smooth cutoffs). 

Adjusting the cutoff function, the operator $P_{small}$ can be written as $\delta P_{\lambda,1}^\flat$, to which Sogge's theorem applies: $\|P_{small}\|_{L^{p'} \to L^p} \lesssim \lambda^{d-1-\frac{2d}{p}} \delta
$ if $p \geq p_{ST}$. Turning to $P_{large}$, it can be bounded through the dispersive estimates~\eqref{dispersive} to obtain, if $p \geq p^* = \frac{2d}{d-2}$,
$$
\left\| \int_{\frac{1}{\lambda} \lesssim |t| \lesssim \frac{1}{\lambda \delta}}  \lambda \delta \widehat{\chi} (\lambda \delta t) e^{i\lambda^2 t} e^{it\Delta} \,dt \right\|_{L^{p'} \to L^p} \lesssim \lambda \delta \int_{\frac{1}{\lambda} \lesssim |t| \lesssim \frac{1}{\lambda \delta}} |t|^{\frac{d}{p} - \frac{d}{2}} \,dt \lesssim \lambda^{\frac{d}{2} - \frac{d}{p}} \delta \lesssim \lambda^{d-1-\frac{2d}{p}} \delta.
$$

Overall, we found that, if the dispersive estimates~\eqref{dispersive} hold, the $L^2 \to L^p$ norm of $P_{\lambda,\delta}$ enjoys the optimal bound $\lambda^{\frac{d-1}{2}-\frac{d}{p}} \delta^{\frac 12}$ as soon as $p \geq p^* = \frac{2d}{d-2}$.

\subsection{Strichartz estimates} Strichartz estimates on $\mathbb{R}^d$ are given by
$$
\| e^{it\Delta} f \|_{L^p_t L^q_x} \lesssim \| f \|_{L^2}, \quad \mbox{if $p,q \geq 2$, \;\; $\frac{2}{p} + \frac{d}{q} = \frac{d}{2}$}.
$$
This estimate was first proved~\cite{GV1,GV2,Yajima} for $p>2$, and the endpoint result for $p = p^* = \frac{2d}{d-2}$
$$
\|  e^{it\Delta} f \|_{L^2_t L^{p^*}_x} \lesssim \| f \|_{L^2}
$$
(which will be of particular relevance for us) was then obtained by Keel and Tao~\cite{KT}.

Strichartz estimates were subsequently extended to perturbations of the Euclidean space: either short range perturbations yielding a global in time estimate~\cite{ST,RZ}, or long range perturbations yielding local in time estimates~\cite{Burq,BT}

Assuming that this endpoint estimate holds on $M$, we can apply it together with the Cauchy-Schwarz inequality to the mollified spectral projector expressed through the Schr\"odinger group~\eqref{projectorfromschrodinger0} and obtain
$$
\| P_{\lambda,\delta}^\flat f \|_{L^{p^*}} \lesssim \lambda \delta (\lambda \delta)^{-\frac 12} \| e^{it\Delta} f \|_{L^2([-\frac{1}{\lambda \delta},\frac{1}{\lambda \delta}], L^{p^*}(M))} \lesssim \lambda^{\frac 12} \delta^{\frac 12} \| f \|_{L^2};
$$
(neglecting tails of $\widehat{\chi_0}$).
This shows that the endpoint Strichartz estimate up to time $\sim \frac{1}{\lambda \delta}$ implies the optimal $L^2 \to L^{p^*}$ estimate for $P_{\lambda,\delta}$.

\section{$L^2 \to L^p$ projector bounds beyond complete smooth Riemannian manifolds}

While the present review focuses on bounds for spectral projectors on smooth complete Riemannian manifolds, many interesting variants can be considered. In the list below, we try to summarize all  generalizations which have been investigated, with the relevant references.

\begin{itemize}
\item Non-smooth metric~\cite{KST1,KST2}

\medskip
\item Manifold with boundary~\cite{SoggeBoundary,SmSo}
\medskip

\item Sub-Riemannian manifolds~\cite{Muller, BBG}
\medskip

\item Pseudo-differential operators \cite{KTZ}
\medskip

\item Schr\"odinger operators~\cite{Karadzhov,KoTa,JLR,BHSS,HS}. In the asymptotically Euclidean case, a related question is the boundedness of wave operators~\cite{Schlag}, which can be combined with the Stein-Tomas theorem to prove a natural analog.
\medskip

\item Restriction of eigenfunctions to lower-dimensional submanifolds~\cite{BGT,BR,CS,Tacy}
\medskip

\item Finite graphs (size $N$), for which one should substitute the infinite size limit $N \to \infty$ to the high frequency limit $\lambda \to \infty$ which is of interest for manifolds. Nearly optimal pointwise bounds for eigenvectors could be proved for random graphs~\cite{BaHuYa}. 
\end{itemize}

\section{Some interesting questions} We conclude with some interesting questions

\subsection{Geometry of maximizers.} In all known examples, the functions at which the operator norm of $P_{\lambda,\delta}$ is nearly achieved concentrate asymptotically (as $\lambda \to \infty$) either at a point, or along a geodesic. Could other possibilities arise?

\subsection{Infinite volume manifolds}. In all known examples, $\| P_{\lambda,\delta} \|_{L^2 \to L^p} \lesssim \lambda^{\frac{d-1}{2} - \frac{d}{p}} \delta^{\frac 12} + \lambda^{\frac{d-1}{2}\left(\frac{1}{2} - \frac{1}{p} \right)}$, where the first term comes from point focusing, and the second from geodesic focusing on a stable closed geodesic. Is this bound indeed universal? At least for all manifolds with a simple behavior at infinity (e.g. flat, conic, hyperbolic)?

For $p< p_{ST}$, very different behaviors can occur for $\| P_{\lambda,\delta} \|_{L^2 \to L^p}$. One finds that this is $\sim  \lambda^{\frac{d-1}{2}\left(\frac{1}{2} - \frac{1}{p} \right) } \delta^\alpha$, with $\alpha = 0$ if there exists a stable closed geodesic, $\alpha = \frac{d+1}{2} \left( \frac{1}{2} - \frac 1p \right)$ on the Euclidean space, $\alpha =  \frac{d-1}{2} \left( \frac{1}{2} - \frac 1p \right)$ on Euclidean cylinders, and $\alpha = \frac 12$ on the hyperbolic space (these values of $\alpha$ can be interpreted in relation to the stability of geodesics). Are other values of $\alpha$ possible?

\subsection{Equidistribution of eigenfunctions and of the spectrum}. In the compact case, is it possible to establish a relation between equidistribution of eigenfunctions on the manifold, and equidistribution of the spectrum over the reals? For instance, denoting $\eta$ for a quantity related to the scale over which eigenvalues are equidistributed, could there hold an estimate such as $\| \varphi \|_{L^\infty} \lesssim \lambda^{\frac{d-1}{2} - \frac{d}{p}} \eta^{\frac{1}{2}}$ for any eigenfunction $\varphi$ with eigenvalue $\lambda$, and for $p$ sufficiently large? On the one hand, such an estimate seems to fail for surfaces of revolution, whose spectrum can be equidistributed, but which admit exceptional eigenfunctions concentrating at the poles. On the other hand, such an estimate is verified on spheres and rational or irrational tori. An analogy can also be made with random matrices, for which a connection between quantum unique ergodicy and GUE statistics for the spectrum can be established in some cases~\cite{Bourgade}.

\subsection{Ignoring stable closed geodesics} Stable closed geodesics are the obstacle to an improvement over the sphere case for $p$ close to $2$. Would an improvement become possible by localizing away from stable geodesics?

\subsection{Link with the Fourier restriction problem}. The question of estimating $\| P_{\lambda,\delta} \|_{L^2 \to L^p}$ provided a generalization of the Fourier restriction problem to arbitrary Riemannian manifolds, when the density in Fourier space is $L^2$. Is it possible (and interesting) to obtain such a generalization for the analog of $L^p$ density in Fourier space? 

The formulation of this question depends on a choice of a distinguished basis of (generalized) eigenfunctions. If the manifold is of infinite volume and for instance asymptotically Euclidean, such a distinguished basis could be provided by scattering theory. If the manifold is compact, then the eigenspaces of the Laplacian are generically non-degenerate, and this provides naturally an orthonormal basis of $L^2$, which will be denoted $(\varphi_k)$. It is tempting to ask about the $\ell^q(\mathbb{N}) \to L^p(M)$ boundedness properties of $(c_k)_{k \in \mathbb{N}} \mapsto \sum_{|\lambda_k - \lambda| < \delta} c_k \varphi_k$ (denoting $\varphi_k$ for the $k$-th eigenfunction). In the case of the torus, it was noticed by Bourgain and Demeter~\cite{BD1} that the case of general $q$ is an immediate consequence of $q=2$. Does this remain true for other manifolds?

\subsection{Nonsymmetric and nonintegrable compact manifolds} We conclude with this most ambitious question: if the manifold under consideration is neither symmetric, nor integrable, there does not exist, to the best of our knowledge, any result giving more than logarithmic improvements over the universal Sogge estimate. Could that be obtained in a single example?

\subsection*{Acknowledgements} The author is grateful to Tristan L\'eger for his suggestions to improve a preliminary version, and to Paul Bourgade, Isabelle Gallagher, Chris Sogge, Stefan Steinerberger, Daniel Tataru, Laurent Thomann and Maciej Zworski for enlightening conversations during the writing of this review article.

When working on this article, the author was supported by a start-up grant from Imperial College and a Wolfson fellowship

\bibliographystyle{abbrv}
\bibliography{references}

\end{document}